\pdfoutput=1
%Author: Christoph Neumann, Robin Sulzgruber

%USE: Latex => PDF !!!!

%Note that Tikz is designed to produce a PDF rather than a PS

\documentclass[12pt,a4paper,twoside]{amsart}

%Include packages, defenitions and other settings
\usepackage{amsrefs}
\usepackage{amsbsy}
\usepackage{amssymb}
\usepackage{amscd}
\usepackage{rotating}
\usepackage{amsmath}
\usepackage{bbding}
\usepackage{latexsym}
\usepackage{a4}
\usepackage[english]{babel} 
\usepackage[latin1]{inputenc} 
\usepackage[a4paper]{geometry}
\usepackage[T1]{fontenc}

\usepackage{array}
\usepackage{multirow}

\usepackage{listings}
\lstset{language=[Visual]C++,
			keywordstyle=\color{blue}\bfseries,
			commentstyle=\color{gray},
			stringstyle=\color{red}\ttfamily,
			tabsize=3,
			numberstyle=\footnotesize
			}
\usepackage{stmaryrd}
\usepackage{cancel}
\usepackage{graphics} 
\usepackage{graphicx}
\usepackage{tikz}
\usetikzlibrary{arrows,decorations.pathmorphing,decorations.footprints,backgrounds,positioning,fit,calc,through,shapes}
\usepackage{pgf}
\definecolor{MyLightMagenta}{cmyk}{0.1,0.8,0,0.1}
\usepackage[unicode=false,colorlinks=false]{hyperref}

\newcommand{\secA}[1]{\section{#1}}

\def\N{{\mathbb N}}
\def\Z{{\mathbb Z}}

\def\S{{\mathfrak S}}

\def\coloneqq{\mathrel{\mathop:}=}
\def\deff{\mathrel{\mathop:}\Leftrightarrow}

\providecommand{\out}[1]{\mathrm{J}(#1)}
\def\bphi{\boldsymbol{\varphi}}

\def\gcd{\mathrm{gcd\;}}

\def\lcm{\mathrm{lcm\;}}

\def\T{\mathrm{T}}

\def\SYT{\mathrm{SYT}}

\def\0{{\bf 0}}
\def\1{{\bf 1}}

\newcommand{\reft}[1]{Theorem~\ref{#1}}
\newcommand{\reff}[1]{Figure~\ref{fig:#1}}

\newcommand{\refc}[1]{Conjecture~\ref{#1}}

\newcommand{\refp}[1]{Propostion~\ref{#1}}
\newcommand{\refs}[1]{Section~\ref{#1}}

\newcommand{\defstil}[1]{\textup{\textbf{\/#1}}}

%norm and absolue value ams-conform
\providecommand{\abs}[1]{\left\lvert#1\right\rvert}

%Restirction

%Bullutcontraction

%s--Arcs

\numberwithin{equation}{section}

\theoremstyle{remark}

\newtheorem{defn}{Definition}[section]

\theoremstyle{plain}

\newtheorem{prop}[defn]{Proposition}
\newtheorem{cor}[defn]{Corollary}
\newtheorem{thm}[defn]{Theorem}

\newtheorem{conj}[defn]{Conjecture}

\theoremstyle{remark}
\newtheorem{rem}[defn]{Remark}

\newtheorem{ex}[defn]{Example}
\newtheorem{ex*}{Example}

\setlength{\unitlength}{1pt}
\linethickness{0.4pt}

\makeindex
\sloppy

%  Figure:  \myfig {name} {caption} {size} {position}

\def\FIGpartition{
\begin{tikzpicture}[plineb/.style={draw=gray,thin},plineg/.style={draw=lightgray}]
\node (a) at (0,0) {
\begin{tikzpicture}[scale=0.5]

\draw [plineb] (0.5,5.5) -- (5.5,5.5);
\draw [plineb] (0.5,4.5) -- (4.5,4.5);
\draw [plineb] (0.5,3.5) -- (2.5,3.5);
\draw [plineb] (0.5,2.5) -- (1.5,2.5);
\draw [plineb] (0.5,1.5) -- (1.5,1.5);

\draw [plineb] (1.5,6.5) -- (1.5,0.5);
\draw [plineb] (2.5,6.5) -- (2.5,3.5);
\draw [plineb] (3.5,6.5) -- (3.5,4.5);
\draw [plineb] (4.5,6.5) -- (4.5,4.5);

\draw [very thick] (0.5,6.5)--(5.5,6.5)--(5.5,5.5)--(4.5,5.5)--(4.5,4.5)--(2.5,4.5)--(2.5,3.5)--(1.5,3.5)--(1.5,.5)--(.5,.5)--cycle;

\node at (1,5) {$\bullet$};
\node at (2,5) {$\bullet$};
\node at (3,5) {$x$};
\node at (4,5) {$\bullet$};
\node at (3,6) {$\bullet$};

\node at (-1,3.5) {$\lambda$};
\end{tikzpicture}
};

\node (b) at (6,0) {
\begin{tikzpicture}[scale=0.5]

\draw [plineb] (.5,4.5) -- (6.5,4.5);
\draw [plineb] (.5,3.5) -- (3.5,3.5);
\draw [plineb] (.5,2.5) -- (2.5,2.5);
\draw [plineb] (.5,1.5) -- (2.5,1.5);

\draw [plineb] (1.5,5.5) -- (1.5,0.5);
\draw [plineb] (2.5,5.5) -- (2.5,1.5);
\draw [plineb] (3.5,5.5) -- (3.5,3.5);
\draw [plineb] (4.5,5.5) -- (4.5,4.5);
\draw [plineb] (5.5,5.5) -- (5.5,4.5);

\draw [very thick] (.5,5.5)--(6.5,5.5)--(6.5,4.5)--(3.5,4.5)--(3.5,3.5)--(2.5,3.5)--(2.5,1.5)--(1.5,1.5)--(1.5,.5)--(.5,.5)--cycle;

\node at (-1,3) {$\lambda'$};
\end{tikzpicture}
};

\end{tikzpicture}
}
\def\FIGnps{
\begin{tikzpicture}[post/.style={->,shorten >=1pt,>=stealth',semithick},plineb/.style={draw=gray,thin},plineg/.style={draw=lightgray}]
\node (a) at (0,2.5) {
\begin{tikzpicture}[scale=0.6]

\draw [plineb] (0.5,3.5) -- (3.5,3.5);
\draw [plineb] (0.5,2.5) -- (3.5,2.5);
\draw [plineb] (0.5,1.5) -- (3.5,1.5);
\draw [plineb] (0.5,0.5) -- (1.5,0.5);

\draw [plineb] (0.5,3.5) -- (0.5,0.5);
\draw [plineb] (1.5,3.5) -- (1.5,0.5);
\draw [plineb] (2.5,3.5) -- (2.5,1.5);
\draw [plineb] (3.5,3.5) -- (3.5,1.5);

\draw [very thick] (0.5,3.5)--(3.5,3.5)--(3.5,1.5)--(1.5,1.5)--(1.5,0.5)--(0.5,0.5)--cycle;

\node at (1,3) {$\scriptstyle 4$};
\node at (1,2) {$\scriptstyle 7$};
\node at (1,1) {$\scriptstyle 1$};

\node at (2,3) {$\scriptstyle 6$};
\node at (2,2) {$\scriptstyle 3$};

\node at (3,3) {$\scriptstyle 5$};
\node at (3,2) {$\scriptstyle 2$};

\end{tikzpicture}
};

\node (b) at (3,2.5) {
\begin{tikzpicture}[scale=0.6]

\draw [plineb] (0.5,3.5) -- (3.5,3.5);
\draw [plineb] (0.5,2.5) -- (3.5,2.5);
\draw [plineb] (0.5,1.5) -- (3.5,1.5);
\draw [plineb] (0.5,0.5) -- (1.5,0.5);

\draw [plineb] (0.5,3.5) -- (0.5,0.5);
\draw [plineb] (1.5,3.5) -- (1.5,0.5);
\draw [plineb] (2.5,3.5) -- (2.5,1.5);
\draw [plineb] (3.5,3.5) -- (3.5,1.5);

\node at (1,3) {$\scriptstyle 4$};
\node at (1,2) {$\scriptstyle 7$};
\node at (1,1) {$\scriptstyle 1$};

\node at (2,3) {$\scriptstyle 6$};
\node at (2,2) {$\scriptstyle 3$};

\node at (3,3) {$\scriptstyle 5$};
\node at (3,2) {$\scriptstyle 2$};

\draw [very thick] (2.5,3.5) rectangle (3.5,2.5);

\end{tikzpicture}
};

\node (c) at (6,2.5) {
\begin{tikzpicture}[scale=0.6]

\draw [plineb] (0.5,3.5) -- (3.5,3.5);
\draw [plineb] (0.5,2.5) -- (3.5,2.5);
\draw [plineb] (0.5,1.5) -- (3.5,1.5);
\draw [plineb] (0.5,0.5) -- (1.5,0.5);
\draw [plineb] (0.5,3.5) -- (0.5,0.5);
\draw [plineb] (1.5,3.5) -- (1.5,0.5);
\draw [plineb] (2.5,3.5) -- (2.5,1.5);
\draw [plineb] (3.5,3.5) -- (3.5,1.5);

\node at (1,3) {$\scriptstyle 4$};
\node at (1,2) {$\scriptstyle 7$};
\node at (1,1) {$\scriptstyle 1$};

\node at (2,3) {$\scriptstyle 6$};
\node at (2,2) {$\scriptstyle 3$};

\node at (3,3) {$\scriptstyle 2$};
\node at (3,2) {$\scriptstyle 5$};

\draw [very thick] (1.5,3.5) rectangle (2.5,2.5);

\end{tikzpicture}
};

\node (d) at (9,2.5) {
\begin{tikzpicture}[scale=0.6]

\draw [plineb] (0.5,3.5) -- (3.5,3.5);
\draw [plineb] (0.5,2.5) -- (3.5,2.5);
\draw [plineb] (0.5,1.5) -- (3.5,1.5);
\draw [plineb] (0.5,0.5) -- (1.5,0.5);
\draw [plineb] (0.5,3.5) -- (0.5,0.5);
\draw [plineb] (1.5,3.5) -- (1.5,0.5);
\draw [plineb] (2.5,3.5) -- (2.5,1.5);
\draw [plineb] (3.5,3.5) -- (3.5,1.5);

\node at (1,3) {$\scriptstyle 4$};
\node at (1,2) {$\scriptstyle 7$};
\node at (1,1) {$\scriptstyle 1$};

\node at (2,3) {$\scriptstyle 2$};
\node at (2,2) {$\scriptstyle 3$};

\node at (3,3) {$\scriptstyle 6$};
\node at (3,2) {$\scriptstyle 5$};

\draw [very thick] (2.5,3.5) rectangle (3.5,2.5);

\end{tikzpicture}
};

\node (e) at (0,0) {
\begin{tikzpicture}[scale=0.6]

\draw [plineb] (0.5,3.5) -- (3.5,3.5);
\draw [plineb] (0.5,2.5) -- (3.5,2.5);
\draw [plineb] (0.5,1.5) -- (3.5,1.5);
\draw [plineb] (0.5,0.5) -- (1.5,0.5);
\draw [plineb] (0.5,3.5) -- (0.5,0.5);
\draw [plineb] (1.5,3.5) -- (1.5,0.5);
\draw [plineb] (2.5,3.5) -- (2.5,1.5);
\draw [plineb] (3.5,3.5) -- (3.5,1.5);

\node at (1,3) {$\scriptstyle 4$};
\node at (1,2) {$\scriptstyle 7$};
\node at (1,1) {$\scriptstyle 1$};

\node at (2,3) {$\scriptstyle 2$};
\node at (2,2) {$\scriptstyle 3$};

\node at (3,3) {$\scriptstyle 5$};
\node at (3,2) {$\scriptstyle 6$};

\draw [very thick] (0.5,2.5) rectangle (1.5,1.5);

\end{tikzpicture}
};

\node (f) at (3,0) {
\begin{tikzpicture}[scale=0.6]

\draw [plineb] (0.5,3.5) -- (3.5,3.5);
\draw [plineb] (0.5,2.5) -- (3.5,2.5);
\draw [plineb] (0.5,1.5) -- (3.5,1.5);
\draw [plineb] (0.5,0.5) -- (1.5,0.5);
\draw [plineb] (0.5,3.5) -- (0.5,0.5);
\draw [plineb] (1.5,3.5) -- (1.5,0.5);
\draw [plineb] (2.5,3.5) -- (2.5,1.5);
\draw [plineb] (3.5,3.5) -- (3.5,1.5);

\node at (1,3) {$\scriptstyle 4$};
\node at (1,2) {$\scriptstyle 1$};
\node at (1,1) {$\scriptstyle 7$};

\node at (2,3) {$\scriptstyle 2$};
\node at (2,2) {$\scriptstyle 3$};

\node at (3,3) {$\scriptstyle 5$};
\node at (3,2) {$\scriptstyle 6$};

\draw [very thick] (0.5,3.5) rectangle (1.5,2.5);

\end{tikzpicture}
};

\node (g) at (6,0) {
\begin{tikzpicture}[scale=0.6]

\draw [plineb] (0.5,3.5) -- (3.5,3.5);
\draw [plineb] (0.5,2.5) -- (3.5,2.5);
\draw [plineb] (0.5,1.5) -- (3.5,1.5);
\draw [plineb] (0.5,0.5) -- (1.5,0.5);
\draw [plineb] (0.5,3.5) -- (0.5,0.5);
\draw [plineb] (1.5,3.5) -- (1.5,0.5);
\draw [plineb] (2.5,3.5) -- (2.5,1.5);
\draw [plineb] (3.5,3.5) -- (3.5,1.5);

\node at (1,3) {$\scriptstyle 1$};
\node at (1,2) {$\scriptstyle 4$};
\node at (1,1) {$\scriptstyle 7$};

\node at (2,3) {$\scriptstyle 2$};
\node at (2,2) {$\scriptstyle 3$};

\node at (3,3) {$\scriptstyle 5$};
\node at (3,2) {$\scriptstyle 6$};

\draw [very thick] (0.5,2.5) rectangle (1.5,1.5);

\end{tikzpicture}
};

\node (h) at (9,0) {
\begin{tikzpicture}[scale=0.6]

\draw [plineb] (0.5,3.5) -- (3.5,3.5);
\draw [plineb] (0.5,2.5) -- (3.5,2.5);
\draw [plineb] (0.5,1.5) -- (3.5,1.5);
\draw [plineb] (0.5,0.5) -- (1.5,0.5);
\draw [plineb] (0.5,3.5) -- (0.5,0.5);
\draw [plineb] (1.5,3.5) -- (1.5,0.5);
\draw [plineb] (2.5,3.5) -- (2.5,1.5);
\draw [plineb] (3.5,3.5) -- (3.5,1.5);

\draw [very thick] (0.5,3.5)--(3.5,3.5)--(3.5,1.5)--(1.5,1.5)--(1.5,0.5)--(0.5,0.5)--cycle;

\node at (1,3) {$\scriptstyle 1$};
\node at (1,2) {$\scriptstyle 3$};
\node at (1,1) {$\scriptstyle 7$};

\node at (2,3) {$\scriptstyle 2$};
\node at (2,2) {$\scriptstyle 4$};

\node at (3,3) {$\scriptstyle 5$};
\node at (3,2) {$\scriptstyle 6$};

\end{tikzpicture}
};

\node at (-1.5,2.5) {$T$};
\draw [post,very thick] (1.1,2.5) -- (1.9,2.5);
\draw [post,very thick] (4.1,2.5) -- (4.9,2.5);
\draw [post,very thick] (7.1,2.5) -- (7.9,2.5);
\draw [post,very thick] (10.1,2.5) -- (10.9,2.5);

\draw [post,very thick] (-1.9,0) -- (-1.1,0);
\draw [post,very thick] (1.1,0) -- (1.9,0);
\draw [post,very thick] (4.1,0) -- (4.9,0);
\draw [post,very thick] (7.1,0) -- (7.9,0);
\node at (10.5,0) {$W$};

\end{tikzpicture}
}
\def\FIGorder{
\begin{tikzpicture}[plineb/.style={draw=gray,thin},plineg/.style={draw=lightgray}]
\node (a) at (0,0) {
\begin{tikzpicture}[scale=0.6,post/.style={->,shorten >=1pt,>=stealth',semithick},pre/.style={<-,shorten >=1pt,>=stealth',semithick}]
% \fill [fill=gray] (3.5,5.5) -- (4.5,5.5) -- (4.5,8.5) -- (8.5,8.5) -- (8.5,7.5) -- (7.5,7.5) -- (7.5,6.5) -- (6.5,6.5) -- (6.5,4.5) -- (5.5,4.5) -- (5.5,3.5) -- (4.5,3.5) -- (4.5,2.5) -- (3.5,2.5) -- (3.5,5.5); 

\draw [plineb] (0.5,3.5) -- (3.5,3.5);
\draw [plineb] (0.5,2.5) -- (3.5,2.5);
\draw [plineb] (0.5,1.5) -- (3.5,1.5);
\draw [plineb] (0.5,0.5) -- (1.5,0.5);

\draw [plineb] (0.5,3.5) -- (0.5,0.5);
\draw [plineb] (1.5,3.5) -- (1.5,0.5);
\draw [plineb] (2.5,3.5) -- (2.5,1.5);
\draw [plineb] (3.5,3.5) -- (3.5,1.5);

\node at (1,3) {$\scriptstyle 1$};
\node at (1,2) {$\scriptstyle 2$};
\node at (1,1) {$\scriptstyle 3$};
\draw [very thick] (0.5,3.5) rectangle (1.5,0.5);

\node at (2,3) {$\scriptstyle 4$};
\node at (2,2) {$\scriptstyle 5$};
\draw [very thick] (1.5,3.5) rectangle (2.5,1.5);

\node at (3,3) {$\scriptstyle 6$};
\node at (3,2) {$\scriptstyle 7$};
\draw [very thick] (2.5,3.5) rectangle (3.5,1.5);

\end{tikzpicture}
};

\node (b) at (5,0) {
\begin{tikzpicture}[scale=0.6,post/.style={->,shorten >=1pt,>=stealth',semithick},pre/.style={<-,shorten >=1pt,>=stealth',semithick}]
% \fill [fill=gray] (3.5,5.5) -- (4.5,5.5) -- (4.5,8.5) -- (8.5,8.5) -- (8.5,7.5) -- (7.5,7.5) -- (7.5,6.5) -- (6.5,6.5) -- (6.5,4.5) -- (5.5,4.5) -- (5.5,3.5) -- (4.5,3.5) -- (4.5,2.5) -- (3.5,2.5) -- (3.5,5.5); 

\draw [plineb] (0.5,3.5) -- (3.5,3.5);
\draw [plineb] (0.5,2.5) -- (3.5,2.5);
\draw [plineb] (0.5,1.5) -- (3.5,1.5);
\draw [plineb] (0.5,0.5) -- (1.5,0.5);

\draw [plineb] (0.5,3.5) -- (0.5,0.5);
\draw [plineb] (1.5,3.5) -- (1.5,0.5);
\draw [plineb] (2.5,3.5) -- (2.5,1.5);
\draw [plineb] (3.5,3.5) -- (3.5,1.5);

\node at (1,3) {$\scriptstyle 1$};
\node at (2,3) {$\scriptstyle 2$};
\node at (3,3) {$\scriptstyle 3$};
\draw [very thick] (0.5,3.5) rectangle (3.5,2.5);

\node at (1,2) {$\scriptstyle 4$};
\node at (2,2) {$\scriptstyle 5$};
\node at (3,2) {$\scriptstyle 6$};
\draw [very thick] (0.5,2.5) rectangle (3.5,1.5);

\node at (1,1) {$\scriptstyle 7$};
\draw [very thick] (0.5,1.5) rectangle (1.5,0.5);

\end{tikzpicture}
};

\end{tikzpicture}
}
\def\FIGotherorder{
\begin{tikzpicture}[plineb/.style={draw=gray,thin},plineg/.style={draw=lightgray}]
\node at (0,-0) {
\begin{tikzpicture}[scale=0.6,post/.style={->,shorten >=1pt,>=stealth',semithick},pre/.style={<-,shorten >=1pt,>=stealth',semithick}]
\draw [plineb] (0.5,8.5) -- (16.5,8.5);
\draw [plineb] (0.5,7.5) -- (16.5,7.5);
\draw [plineb] (0.5,6.5) -- (15.5,6.5);
\draw [plineb] (0.5,5.5) -- (14.5,5.5);
\draw [plineb] (0.5,4.5) -- (14.5,4.5);
\draw [plineb] (0.5,3.5) -- (14.5,3.5);
\draw [plineb] (0.5,2.5) -- (14.5,2.5);
\draw [plineb] (0.5,1.5) -- (13.5,1.5);
\draw [plineb] (0.5,0.5) -- (13.5,0.5);
\draw [plineb] (0.5,-0.5) -- (12.5,-0.5);
\draw [plineb] (0.5,-1.5) -- (5.5,-1.5);

\draw [plineb] (0.5,8.5) -- (0.5,-1.5);
\draw [plineb] (1.5,8.5) -- (1.5,-1.5);
\draw [plineb] (2.5,8.5) -- (2.5,-1.5);
\draw [plineb] (3.5,8.5) -- (3.5,-1.5);
\draw [plineb] (4.5,8.5) -- (4.5,-1.5);
\draw [plineb] (5.5,8.5) -- (5.5,-1.5);
\draw [plineb] (6.5,8.5) -- (6.5,-0.5);
\draw [plineb] (7.5,8.5) -- (7.5,-0.5);
\draw [plineb] (8.5,8.5) -- (8.5,-0.5);
\draw [plineb] (9.5,8.5) -- (9.5,-0.5);
\draw [plineb] (10.5,8.5) -- (10.5,-0.5);
\draw [plineb] (11.5,8.5) -- (11.5,-0.5);
\draw [plineb] (12.5,8.5) -- (12.5,-0.5);
\draw [plineb] (13.5,8.5) -- (13.5,0.5);
\draw [plineb] (14.5,8.5) -- (14.5,2.5);
\draw [plineb] (15.5,8.5) -- (15.5,6.5);
\draw [plineb] (16.5,8.5) -- (16.5,7.5);

%row 1
\node at (1,8) {$\scriptstyle 1$};
\node at (2,8) {$\scriptstyle 2$};
\node at (3,8) {$\scriptstyle 3$};
\node at (4,8) {$\scriptstyle 4$};
\node at (5,8) {$\scriptstyle 5$};
\node at (6,8) {$\scriptstyle 6$};
\node at (7,8) {$\scriptstyle 7$};
\node at (8,8) {$\scriptstyle 8$};
\node at (9,8) {$\scriptstyle 9$};
\node at (10,8) {$\scriptstyle 10$};
\node at (11,8) {$\scriptstyle 11$};
\node at (12,8) {$\scriptstyle 12$};
\node at (13,8) {$\scriptstyle 13$};
\node at (14,8) {$\scriptstyle 14$};
\node at (15,8) {$\scriptstyle 15$};
\node at (16,8) {$\scriptstyle 16$};
\draw [very thick] (0.5,7.5) rectangle (16.5,8.5);

%col 1
\node at (1,7) {$\scriptstyle 17$};
\node at (1,6) {$\scriptstyle 18$};
\node at (1,5) {$\scriptstyle 19$};
\node at (1,4) {$\scriptstyle 20$};
\node at (1,3) {$\scriptstyle 21$};
\node at (1,2) {$\scriptstyle 22$};
\node at (1,1) {$\scriptstyle 23$};
\node at (1,0) {$\scriptstyle 24$};
\node at (1,-1) {$\scriptstyle 25$};
\draw [very thick] (0.5,-1.5) rectangle (1.5,7.5);

%col 2
\node at (2,7) {$\scriptstyle 26$};
\node at (2,6) {$\scriptstyle 27$};
\node at (2,5) {$\scriptstyle 28$};
\node at (2,4) {$\scriptstyle 29$};
\node at (2,3) {$\scriptstyle 30$};
\node at (2,2) {$\scriptstyle 31$};
\node at (2,1) {$\scriptstyle 32$};
\node at (2,0) {$\scriptstyle 33$};
\node at (2,-1) {$\scriptstyle 34$};
\draw [very thick] (1.5,-1.5) rectangle (2.5,7.5);

%col 3
\node at (3,7) {$\scriptstyle 35$};
\node at (3,6) {$\scriptstyle 36$};
\node at (3,5) {$\scriptstyle 37$};
\node at (3,4) {$\scriptstyle 38$};
\node at (3,3) {$\scriptstyle 39$};
\node at (3,2) {$\scriptstyle 40$};
\node at (3,1) {$\scriptstyle 41$};
\node at (3,0) {$\scriptstyle 42$};
\node at (3,-1) {$\scriptstyle 43$};
\draw [very thick] (2.5,-1.5) rectangle (3.5,7.5);

%row 2
\node at (4,7) {$\scriptstyle 44$};
\node at (5,7) {$\scriptstyle 45$};
\node at (6,7) {$\scriptstyle 46$};
\node at (7,7) {$\scriptstyle 47$};
\node at (8,7) {$\scriptstyle 48$};
\node at (9,7) {$\scriptstyle 49$};
\node at (10,7) {$\scriptstyle 50$};
\node at (11,7) {$\scriptstyle 51$};
\node at (12,7) {$\scriptstyle 52$};
\node at (13,7) {$\scriptstyle 53$};
\node at (14,7) {$\scriptstyle 54$};
\node at (15,7) {$\scriptstyle 55$};
\draw [very thick] (3.5,6.5) rectangle (15.5,7.5);

%row 3
\node at (4,6) {$\scriptstyle 56$};
\node at (5,6) {$\scriptstyle 57$};
\node at (6,6) {$\scriptstyle 58$};
\node at (7,6) {$\scriptstyle 59$};
\node at (8,6) {$\scriptstyle 60$};
\node at (9,6) {$\scriptstyle 61$};
\node at (10,6) {$\scriptstyle 62$};
\node at (11,6) {$\scriptstyle 63$};
\node at (12,6) {$\scriptstyle 64$};
\node at (13,6) {$\scriptstyle 65$};
\node at (14,6) {$\scriptstyle 66$};
\draw [very thick] (3.5,5.5) rectangle (14.5,6.5);

%col 4
\node at (4,5) {$\scriptstyle 67$};
\node at (4,4) {$\scriptstyle 68$};
\node at (4,3) {$\scriptstyle 69$};
\node at (4,2) {$\scriptstyle 70$};
\node at (4,1) {$\scriptstyle 71$};
\node at (4,0) {$\scriptstyle 72$};
\node at (4,-1) {$\scriptstyle 73$};
\draw [very thick] (3.5,-1.5) rectangle (4.5,5.5);

%row 4
\node at (5,5) {$\scriptstyle 74$};
\node at (6,5) {$\scriptstyle 75$};
\node at (7,5) {$\scriptstyle 76$};
\node at (8,5) {$\scriptstyle 77$};
\node at (9,5) {$\scriptstyle 78$};
\node at (10,5) {$\scriptstyle 79$};
\node at (11,5) {$\scriptstyle 80$};
\node at (12,5) {$\scriptstyle 81$};
\node at (13,5) {$\scriptstyle 82$};
\node at (14,5) {$\scriptstyle 83$};
\draw [very thick] (4.5,4.5) rectangle (14.5,5.5);

%row 5
\node at (5,4) {$\scriptstyle 84$};
\node at (6,4) {$\scriptstyle 85$};
\node at (7,4) {$\scriptstyle 86$};
\node at (8,4) {$\scriptstyle 87$};
\node at (9,4) {$\scriptstyle 88$};
\node at (10,4) {$\scriptstyle 89$};
\node at (11,4) {$\scriptstyle 90$};
\node at (12,4) {$\scriptstyle 91$};
\node at (13,4) {$\scriptstyle 92$};
\node at (14,4) {$\scriptstyle 93$};
\draw [very thick] (4.5,3.5) rectangle (14.5,4.5);

%row 6
\node at (5,3) {$\scriptstyle 94$};
\node at (6,3) {$\scriptstyle 95$};
\node at (7,3) {$\scriptstyle 96$};
\node at (8,3) {$\scriptstyle 97$};
\node at (9,3) {$\scriptstyle 98$};
\node at (10,3) {$\scriptstyle 99$};
\node at (11,3) {$\scriptstyle 100$};
\node at (12,3) {$\scriptstyle 101$};
\node at (13,3) {$\scriptstyle 102$};
\node at (14,3) {$\scriptstyle 103$};
\draw [very thick] (4.5,2.5) rectangle (14.5,3.5);

%col 5
\node at (5,2) {$\scriptstyle 104$};
\node at (5,1) {$\scriptstyle 105$};
\node at (5,0) {$\scriptstyle 106$};
\node at (5,-1) {$\scriptstyle 107$};
\draw [very thick] (4.5,-1.5) rectangle (5.5,2.5);

%col 6
\node at (6,2) {$\scriptstyle 108$};
\node at (6,1) {$\scriptstyle 109$};
\node at (6,0) {$\scriptstyle 110$};
\draw [very thick] (5.5,-0.5) rectangle (6.5,2.5);

%col 7
\node at (7,2) {$\scriptstyle 111$};
\node at (7,1) {$\scriptstyle 112$};
\node at (7,0) {$\scriptstyle 113$};
\draw [very thick] (6.5,-0.5) rectangle (7.5,2.5);

%row 7
\node at (8,2) {$\scriptstyle 114$};
\node at (9,2) {$\scriptstyle 115$};
\node at (10,2) {$\scriptstyle 116$};
\node at (11,2) {$\scriptstyle 117$};
\node at (12,2) {$\scriptstyle 118$};
\node at (13,2) {$\scriptstyle 119$};
\draw [very thick] (7.5,1.5) rectangle (13.5,2.5);

%row 8
\node at (8,1) {$\scriptstyle 120$};
\node at (9,1) {$\scriptstyle 121$};
\node at (10,1) {$\scriptstyle 122$};
\node at (11,1) {$\scriptstyle 123$};
\node at (12,1) {$\scriptstyle 124$};
\node at (13,1) {$\scriptstyle 125$};
\draw [very thick] (7.5,0.5) rectangle (13.5,1.5);

%row 8
\node at (8,0) {$\scriptstyle 126$};
\node at (9,0) {$\scriptstyle 127$};
\node at (10,0) {$\scriptstyle 128$};
\node at (11,0) {$\scriptstyle 129$};
\node at (12,0) {$\scriptstyle 130$};
\draw [very thick] (7.5,-0.5) rectangle (12.5,0.5);

\end{tikzpicture}
};

\end{tikzpicture}
}
\def\FIGexchangeOne{
\begin{tikzpicture}[post/.style={->,shorten >=1pt,>=stealth',semithick},plineb/.style={draw=black},plineg/.style={draw=lightgray}]
\node (a) at (0,0) {
\begin{tikzpicture}[scale=0.7]
\fill [fill=gray] (3.5,5.5) -- (4.5,5.5) -- (4.5,8.5) -- (8.5,8.5) -- (8.5,7.5) -- (7.5,7.5) -- (7.5,6.5) -- (6.5,6.5) -- (6.5,4.5) -- (5.5,4.5) -- (5.5,3.5) -- (4.5,3.5) -- (4.5,2.5) -- (3.5,2.5) -- (3.5,5.5); 

\draw [plineb] (0.5,8.5) -- (8.5,8.5);
\draw [plineb] (0.5,7.5) -- (8.5,7.5);
\draw [plineb] (0.5,6.5) -- (7.5,6.5);
\draw [plineb] (0.5,5.5) -- (6.5,5.5);
\draw [plineb] (0.5,4.5) -- (6.5,4.5);
\draw [plineb] (0.5,3.5) -- (5.5,3.5);
\draw [plineb] (0.5,2.5) -- (4.5,2.5);

\draw [plineb] (0.5,8.5) -- (0.5,2.5);
\draw [plineb] (1.5,8.5) -- (1.5,2.5);
\draw [plineb] (2.5,8.5) -- (2.5,2.5);
\draw [plineb] (3.5,8.5) -- (3.5,2.5);
\draw [plineb] (4.5,8.5) -- (4.5,2.5);
\draw [plineb] (5.5,8.5) -- (5.5,3.5);
\draw [plineb] (6.5,8.5) -- (6.5,4.5);
\draw [plineb] (7.5,8.5) -- (7.5,6.5);
\draw [plineb] (8.5,8.5) -- (8.5,7.5);

\node at (1.75,7.25) {$\scriptstyle x_i$};
\node at (2,6.8) {$\scriptstyle b$};

\node at (3.75,6.25) {$\scriptstyle x_j$};
\node at (4,5.8) {$\scriptstyle b+1$};
\end{tikzpicture}
};

\node (b) at (8.2,0) {
\begin{tikzpicture}[scale=0.7]
\fill [fill=gray] (3.5,5.5) -- (4.5,5.5) -- (4.5,8.5) -- (8.5,8.5) -- (8.5,7.5) -- (7.5,7.5) -- (7.5,6.5) -- (6.5,6.5) -- (6.5,4.5) -- (5.5,4.5) -- (5.5,3.5) -- (4.5,3.5) -- (4.5,2.5) -- (3.5,2.5) -- (3.5,5.5); 

\draw [plineb] (0.5,8.5) -- (8.5,8.5);
\draw [plineb] (0.5,7.5) -- (8.5,7.5);
\draw [plineb] (0.5,6.5) -- (7.5,6.5);
\draw [plineb] (0.5,5.5) -- (6.5,5.5);
\draw [plineb] (0.5,4.5) -- (6.5,4.5);
\draw [plineb] (0.5,3.5) -- (5.5,3.5);
\draw [plineb] (0.5,2.5) -- (4.5,2.5);

\draw [plineb] (0.5,8.5) -- (0.5,2.5);
\draw [plineb] (1.5,8.5) -- (1.5,2.5);
\draw [plineb] (2.5,8.5) -- (2.5,2.5);
\draw [plineb] (3.5,8.5) -- (3.5,2.5);
\draw [plineb] (4.5,8.5) -- (4.5,2.5);
\draw [plineb] (5.5,8.5) -- (5.5,3.5);
\draw [plineb] (6.5,8.5) -- (6.5,4.5);
\draw [plineb] (7.5,8.5) -- (7.5,6.5);
\draw [plineb] (8.5,8.5) -- (8.5,7.5);

\node at (1.75,7.25) {$\scriptstyle x_i$};
\node at (2,6.8) {$\scriptstyle b+1$};

\node at (3.75,6.25) {$\scriptstyle x_j$};
\node at (4,5.8) {$\scriptstyle b$};
\end{tikzpicture}
};

\node at (2,-1) {$T_k$};
\node at (10.2,-1) {$T_k^*$};

\end{tikzpicture}
}
\def\FIGexchangeTwo{
\begin{tikzpicture}[post/.style={->,shorten >=1pt,>=stealth',semithick},plineb/.style={draw=black},plineg/.style={draw=lightgray}]
\node (a) at (0,0) {
\begin{tikzpicture}[scale=0.7]
\fill [fill=lightgray] (3.5,5.5) -- (4.5,5.5) -- (4.5,8.5) -- (8.5,8.5) -- (8.5,7.5) -- (7.5,7.5) -- (7.5,6.5) -- (6.5,6.5) -- (6.5,4.5) -- (5.5,4.5) -- (5.5,3.5) -- (4.5,3.5) -- (4.5,2.5) -- (3.5,2.5) -- (3.5,5.5); 
\fill [fill=gray] (3.5,5.5) rectangle (4.5,6.5);

\draw [plineb] (0.5,8.5) -- (8.5,8.5);
\draw [plineb] (0.5,7.5) -- (8.5,7.5);
\draw [plineb] (0.5,6.5) -- (7.5,6.5);
\draw [plineb] (0.5,5.5) -- (6.5,5.5);
\draw [plineb] (0.5,4.5) -- (6.5,4.5);
\draw [plineb] (0.5,3.5) -- (5.5,3.5);
\draw [plineb] (0.5,2.5) -- (4.5,2.5);

\draw [plineb] (0.5,8.5) -- (0.5,2.5);
\draw [plineb] (1.5,8.5) -- (1.5,2.5);
\draw [plineb] (2.5,8.5) -- (2.5,2.5);
\draw [plineb] (3.5,8.5) -- (3.5,2.5);
\draw [plineb] (4.5,8.5) -- (4.5,2.5);
\draw [plineb] (5.5,8.5) -- (5.5,3.5);
\draw [plineb] (6.5,8.5) -- (6.5,4.5);
\draw [plineb] (7.5,8.5) -- (7.5,6.5);
\draw [plineb] (8.5,8.5) -- (8.5,7.5);

\draw [post,ultra thick,white] (4,6) -- (5,6) -- (5,5) -- (6,5);

\node at (1.75,7.25) {$\scriptstyle x_i$};
\node at (2,6.8) {$\scriptstyle b$};

\node at (3.75,6.25) {$\scriptstyle x_j$};
\node at (6,5) {$\scriptstyle b+1$};
\end{tikzpicture}
};

\node (b) at (8.2,0) {
\begin{tikzpicture}[scale=0.7]
\fill [fill=lightgray] (3.5,5.5) -- (4.5,5.5) -- (4.5,8.5) -- (8.5,8.5) -- (8.5,7.5) -- (7.5,7.5) -- (7.5,6.5) -- (6.5,6.5) -- (6.5,4.5) -- (5.5,4.5) -- (5.5,3.5) -- (4.5,3.5) -- (4.5,2.5) -- (3.5,2.5) -- (3.5,5.5); 
\fill [fill=gray] (3.5,5.5) rectangle (4.5,6.5);

\draw [plineb] (0.5,8.5) -- (8.5,8.5);
\draw [plineb] (0.5,7.5) -- (8.5,7.5);
\draw [plineb] (0.5,6.5) -- (7.5,6.5);
\draw [plineb] (0.5,5.5) -- (6.5,5.5);
\draw [plineb] (0.5,4.5) -- (6.5,4.5);
\draw [plineb] (0.5,3.5) -- (5.5,3.5);
\draw [plineb] (0.5,2.5) -- (4.5,2.5);

\draw [plineb] (0.5,8.5) -- (0.5,2.5);
\draw [plineb] (1.5,8.5) -- (1.5,2.5);
\draw [plineb] (2.5,8.5) -- (2.5,2.5);
\draw [plineb] (3.5,8.5) -- (3.5,2.5);
\draw [plineb] (4.5,8.5) -- (4.5,2.5);
\draw [plineb] (5.5,8.5) -- (5.5,3.5);
\draw [plineb] (6.5,8.5) -- (6.5,4.5);
\draw [plineb] (7.5,8.5) -- (7.5,6.5);
\draw [plineb] (8.5,8.5) -- (8.5,7.5);

\draw [post,ultra thick,white] (4,6) -- (5,6) -- (5,5) -- (6,5);

\node at (1.75,7.25) {$\scriptstyle x_i$};
\node at (2,6.8) {$\scriptstyle b+1$};

\node at (3.75,6.25) {$\scriptstyle x_j$};
\node at (6,5) {$\scriptstyle b$};
\end{tikzpicture}
};

\node at (2,-1) {$T_k$};
\node at (10.2,-1) {$T_k^*$};

\end{tikzpicture}
}
\def\FIGexchangeThree{
\begin{tikzpicture}[post/.style={->,shorten >=1pt,>=stealth',semithick},plineb/.style={draw=black},plineg/.style={draw=lightgray}]
\node (a) at (0,0) {
\begin{tikzpicture}[scale=0.7]
\fill [fill=lightgray] (3.5,6.5) -- (4.5,6.5) -- (4.5,8.5) -- (8.5,8.5) -- (8.5,7.5) -- (7.5,7.5) -- (7.5,6.5) -- (6.5,6.5) -- (6.5,4.5) -- (5.5,4.5) -- (5.5,3.5) -- (4.5,3.5) -- (4.5,2.5) -- (3.5,2.5) -- (3.5,6.5); 
\fill [fill=gray] (1.5,6.5) -- (2.5,6.5) -- (2.5,8.5) -- (4.5,8.5) -- (4.5,6.5) -- (3.5,6.5) -- (3.5,2.5) -- (1.5,2.5) -- (1.5,6.5);

\draw [plineb] (0.5,8.5) -- (8.5,8.5);
\draw [plineb] (0.5,7.5) -- (8.5,7.5);
\draw [plineb] (0.5,6.5) -- (7.5,6.5);
\draw [plineb] (0.5,5.5) -- (6.5,5.5);
\draw [plineb] (0.5,4.5) -- (6.5,4.5);
\draw [plineb] (0.5,3.5) -- (5.5,3.5);
\draw [plineb] (0.5,2.5) -- (4.5,2.5);

\draw [plineb] (0.5,8.5) -- (0.5,2.5);
\draw [plineb] (1.5,8.5) -- (1.5,2.5);
\draw [plineb] (2.5,8.5) -- (2.5,2.5);
\draw [plineb] (3.5,8.5) -- (3.5,2.5);
\draw [plineb] (4.5,8.5) -- (4.5,2.5);
\draw [plineb] (5.5,8.5) -- (5.5,3.5);
\draw [plineb] (6.5,8.5) -- (6.5,4.5);
\draw [plineb] (7.5,8.5) -- (7.5,6.5);
\draw [plineb] (8.5,8.5) -- (8.5,7.5);

\draw [post,ultra thick,white] (6,5) -- (6,6);
\draw [post,ultra thick,white] (6,6) -- (5,6);

\node at (1.75,7.25) {$\scriptstyle x_i$};
\node at (2,6.8) {$\scriptstyle b$};

\node at (3.75,6.25) {$\scriptstyle x_j$};
\node at (5,6) {$\scriptstyle b+1$};
\end{tikzpicture}
};

\node (b) at (8.2,0) {
\begin{tikzpicture}[scale=0.7]
\fill [fill=lightgray] (3.5,6.5) -- (4.5,6.5) -- (4.5,8.5) -- (8.5,8.5) -- (8.5,7.5) -- (7.5,7.5) -- (7.5,6.5) -- (6.5,6.5) -- (6.5,4.5) -- (5.5,4.5) -- (5.5,3.5) -- (4.5,3.5) -- (4.5,2.5) -- (3.5,2.5) -- (3.5,6.5); 
\fill [fill=gray] (1.5,6.5) -- (2.5,6.5) -- (2.5,8.5) -- (4.5,8.5) -- (4.5,6.5) -- (3.5,6.5) -- (3.5,2.5) -- (1.5,2.5) -- (1.5,6.5);

\draw [plineb] (0.5,8.5) -- (8.5,8.5);
\draw [plineb] (0.5,7.5) -- (8.5,7.5);
\draw [plineb] (0.5,6.5) -- (7.5,6.5);
\draw [plineb] (0.5,5.5) -- (6.5,5.5);
\draw [plineb] (0.5,4.5) -- (6.5,4.5);
\draw [plineb] (0.5,3.5) -- (5.5,3.5);
\draw [plineb] (0.5,2.5) -- (4.5,2.5);

\draw [plineb] (0.5,8.5) -- (0.5,2.5);
\draw [plineb] (1.5,8.5) -- (1.5,2.5);
\draw [plineb] (2.5,8.5) -- (2.5,2.5);
\draw [plineb] (3.5,8.5) -- (3.5,2.5);
\draw [plineb] (4.5,8.5) -- (4.5,2.5);
\draw [plineb] (5.5,8.5) -- (5.5,3.5);
\draw [plineb] (6.5,8.5) -- (6.5,4.5);
\draw [plineb] (7.5,8.5) -- (7.5,6.5);
\draw [plineb] (8.5,8.5) -- (8.5,7.5);

\draw [post,ultra thick,white] (6,5) -- (6,6);
\draw [post,ultra thick,white] (6,6) -- (5,6);

\node at (1.75,7.25) {$\scriptstyle x_i$};
\node at (2,6.8) {$\scriptstyle b+1$};

\node at (3.75,6.25) {$\scriptstyle x_j$};
\node at (5,6) {$\scriptstyle b$};
\end{tikzpicture}
};

\node at (2,-1) {$T_k$};
\node at (10.2,-1) {$T_k^*$};

\end{tikzpicture}
}
\def\FIGexchangeFour{
\begin{tikzpicture}[post/.style={->,shorten >=1pt,>=stealth',semithick},plineb/.style={draw=black},plineg/.style={draw=lightgray}]
\node (a) at (0,0) {
\begin{tikzpicture}[scale=0.7]
\fill [fill=lightgray] (3.5,6.5) -- (4.5,6.5) -- (4.5,8.5) -- (8.5,8.5) -- (8.5,7.5) -- (7.5,7.5) -- (7.5,6.5) -- (6.5,6.5) -- (6.5,4.5) -- (5.5,4.5) -- (5.5,3.5) -- (4.5,3.5) -- (4.5,2.5) -- (3.5,2.5) -- (3.5,6.5); 
\fill [fill=lightgray] (1.5,6.5) -- (2.5,6.5) -- (2.5,8.5) -- (4.5,8.5) -- (4.5,6.5) -- (3.5,6.5) -- (3.5,2.5) -- (1.5,2.5) -- (1.5,6.5);
\fill [fill=gray] (1.5,6.5) rectangle (2.5,7.5);

\draw [plineb] (0.5,8.5) -- (8.5,8.5);
\draw [plineb] (0.5,7.5) -- (8.5,7.5);
\draw [plineb] (0.5,6.5) -- (7.5,6.5);
\draw [plineb] (0.5,5.5) -- (6.5,5.5);
\draw [plineb] (0.5,4.5) -- (6.5,4.5);
\draw [plineb] (0.5,3.5) -- (5.5,3.5);
\draw [plineb] (0.5,2.5) -- (4.5,2.5);

\draw [plineb] (0.5,8.5) -- (0.5,2.5);
\draw [plineb] (1.5,8.5) -- (1.5,2.5);
\draw [plineb] (2.5,8.5) -- (2.5,2.5);
\draw [plineb] (3.5,8.5) -- (3.5,2.5);
\draw [plineb] (4.5,8.5) -- (4.5,2.5);
\draw [plineb] (5.5,8.5) -- (5.5,3.5);
\draw [plineb] (6.5,8.5) -- (6.5,4.5);
\draw [plineb] (7.5,8.5) -- (7.5,6.5);
\draw [plineb] (8.5,8.5) -- (8.5,7.5);

\draw [post,ultra thick,white] (2,7) -- (2,6) -- (3,6) -- (3,5);

\node at (1.75,7.25) {$\scriptstyle x_i$};
\node at (3,5) {$\scriptstyle b$};

\node at (3.75,6.25) {$\scriptstyle x_j$};
\node at (5,6) {$\scriptstyle b+1$};
\end{tikzpicture}
};

\node (b) at (8.2,0) {
\begin{tikzpicture}[scale=0.7]
\fill [fill=lightgray] (3.5,6.5) -- (4.5,6.5) -- (4.5,8.5) -- (8.5,8.5) -- (8.5,7.5) -- (7.5,7.5) -- (7.5,6.5) -- (6.5,6.5) -- (6.5,4.5) -- (5.5,4.5) -- (5.5,3.5) -- (4.5,3.5) -- (4.5,2.5) -- (3.5,2.5) -- (3.5,6.5); 
\fill [fill=lightgray] (1.5,6.5) -- (2.5,6.5) -- (2.5,8.5) -- (4.5,8.5) -- (4.5,6.5) -- (3.5,6.5) -- (3.5,2.5) -- (1.5,2.5) -- (1.5,6.5);
\fill [fill=gray] (1.5,6.5) rectangle (2.5,7.5);

\draw [plineb] (0.5,8.5) -- (8.5,8.5);
\draw [plineb] (0.5,7.5) -- (8.5,7.5);
\draw [plineb] (0.5,6.5) -- (7.5,6.5);
\draw [plineb] (0.5,5.5) -- (6.5,5.5);
\draw [plineb] (0.5,4.5) -- (6.5,4.5);
\draw [plineb] (0.5,3.5) -- (5.5,3.5);
\draw [plineb] (0.5,2.5) -- (4.5,2.5);

\draw [plineb] (0.5,8.5) -- (0.5,2.5);
\draw [plineb] (1.5,8.5) -- (1.5,2.5);
\draw [plineb] (2.5,8.5) -- (2.5,2.5);
\draw [plineb] (3.5,8.5) -- (3.5,2.5);
\draw [plineb] (4.5,8.5) -- (4.5,2.5);
\draw [plineb] (5.5,8.5) -- (5.5,3.5);
\draw [plineb] (6.5,8.5) -- (6.5,4.5);
\draw [plineb] (7.5,8.5) -- (7.5,6.5);
\draw [plineb] (8.5,8.5) -- (8.5,7.5);

\draw [post,ultra thick,white] (2,7) -- (2,6) -- (3,6) -- (3,5);

\node at (1.75,7.25) {$\scriptstyle x_i$};
\node at (3,5) {$\scriptstyle b+1$};

\node at (3.75,6.25) {$\scriptstyle x_j$};
\node at (5,6) {$\scriptstyle b$};
\end{tikzpicture}
};

\node at (2,-1) {$T_k$};
\node at (10.2,-1) {$T_k^*$};

\end{tikzpicture}
}
\def\FIGexchangeFive{
\begin{tikzpicture}[post/.style={->,shorten >=1pt,>=stealth',semithick},plineb/.style={draw=black},plineg/.style={draw=lightgray}]
\node (a) at (0,0) {
\begin{tikzpicture}[scale=0.7]
\fill [fill=lightgray] (3.5,6.5) -- (4.5,6.5) -- (4.5,8.5) -- (8.5,8.5) -- (8.5,7.5) -- (7.5,7.5) -- (7.5,6.5) -- (6.5,6.5) -- (6.5,4.5) -- (5.5,4.5) -- (5.5,3.5) -- (4.5,3.5) -- (4.5,2.5) -- (3.5,2.5) -- (3.5,6.5); 
\fill [fill=lightgray] (1.5,6.5) -- (2.5,6.5) -- (2.5,8.5) -- (4.5,8.5) -- (4.5,6.5) -- (3.5,6.5) -- (3.5,2.5) -- (1.5,2.5) -- (1.5,6.5);
\fill [fill=gray] (1.5,6.5) rectangle (2.5,7.5);

\draw [plineb] (0.5,8.5) -- (8.5,8.5);
\draw [plineb] (0.5,7.5) -- (8.5,7.5);
\draw [plineb] (0.5,6.5) -- (7.5,6.5);
\draw [plineb] (0.5,5.5) -- (6.5,5.5);
\draw [plineb] (0.5,4.5) -- (6.5,4.5);
\draw [plineb] (0.5,3.5) -- (5.5,3.5);
\draw [plineb] (0.5,2.5) -- (4.5,2.5);

\draw [plineb] (0.5,8.5) -- (0.5,2.5);
\draw [plineb] (1.5,8.5) -- (1.5,2.5);
\draw [plineb] (2.5,8.5) -- (2.5,2.5);
\draw [plineb] (3.5,8.5) -- (3.5,2.5);
\draw [plineb] (4.5,8.5) -- (4.5,2.5);
\draw [plineb] (5.5,8.5) -- (5.5,3.5);
\draw [plineb] (6.5,8.5) -- (6.5,4.5);
\draw [plineb] (7.5,8.5) -- (7.5,6.5);
\draw [plineb] (8.5,8.5) -- (8.5,7.5);

\draw [post,ultra thick,white] (2,7) -- (5,7);

\node at (1.75,7.25) {$\scriptstyle x_i$};
\node at (5,7) {$\scriptstyle b$};

\node at (3.75,6.25) {$\scriptstyle x_j$};
\node at (5,6) {$\scriptstyle b+1$};
\end{tikzpicture}
};

\node (b) at (8.2,0) {
\begin{tikzpicture}[scale=0.7]
\fill [fill=lightgray] (3.5,6.5) -- (4.5,6.5) -- (4.5,8.5) -- (8.5,8.5) -- (8.5,7.5) -- (7.5,7.5) -- (7.5,6.5) -- (6.5,6.5) -- (6.5,4.5) -- (5.5,4.5) -- (5.5,3.5) -- (4.5,3.5) -- (4.5,2.5) -- (3.5,2.5) -- (3.5,6.5); 
\fill [fill=lightgray] (1.5,6.5) -- (2.5,6.5) -- (2.5,8.5) -- (4.5,8.5) -- (4.5,6.5) -- (3.5,6.5) -- (3.5,2.5) -- (1.5,2.5) -- (1.5,6.5);
\fill [fill=gray] (1.5,6.5) rectangle (2.5,7.5);

\draw [plineb] (0.5,8.5) -- (8.5,8.5);
\draw [plineb] (0.5,7.5) -- (8.5,7.5);
\draw [plineb] (0.5,6.5) -- (7.5,6.5);
\draw [plineb] (0.5,5.5) -- (6.5,5.5);
\draw [plineb] (0.5,4.5) -- (6.5,4.5);
\draw [plineb] (0.5,3.5) -- (5.5,3.5);
\draw [plineb] (0.5,2.5) -- (4.5,2.5);

\draw [plineb] (0.5,8.5) -- (0.5,2.5);
\draw [plineb] (1.5,8.5) -- (1.5,2.5);
\draw [plineb] (2.5,8.5) -- (2.5,2.5);
\draw [plineb] (3.5,8.5) -- (3.5,2.5);
\draw [plineb] (4.5,8.5) -- (4.5,2.5);
\draw [plineb] (5.5,8.5) -- (5.5,3.5);
\draw [plineb] (6.5,8.5) -- (6.5,4.5);
\draw [plineb] (7.5,8.5) -- (7.5,6.5);
\draw [plineb] (8.5,8.5) -- (8.5,7.5);

\draw [post,ultra thick,white] (2,7) -- (5,7) -- (5,6);

\node at (1.75,7.25) {$\scriptstyle x_i$};
\node at (5,6) {$\scriptstyle b+1$};

\node at (3.75,6.25) {$\scriptstyle x_j$};
\node at (5,7) {$\scriptstyle b$};
\end{tikzpicture}
};

\node at (2,-1) {$T_k$};
\node at (10.2,-1) {$T_k^*$};

\end{tikzpicture}
}
\def\FIGexchangeSix{
\begin{tikzpicture}[post/.style={->,shorten >=1pt,>=stealth',semithick},plineb/.style={draw=black},plineg/.style={draw=lightgray}]
\node (a) at (0,0) {
\begin{tikzpicture}[scale=0.7]
\fill [fill=lightgray] (3.5,6.5) -- (4.5,6.5) -- (4.5,8.5) -- (8.5,8.5) -- (8.5,7.5) -- (7.5,7.5) -- (7.5,6.5) -- (6.5,6.5) -- (6.5,4.5) -- (5.5,4.5) -- (5.5,3.5) -- (4.5,3.5) -- (4.5,2.5) -- (3.5,2.5) -- (3.5,6.5); 
\fill [fill=lightgray] (1.5,6.5) -- (2.5,6.5) -- (2.5,8.5) -- (4.5,8.5) -- (4.5,6.5) -- (3.5,6.5) -- (3.5,2.5) -- (1.5,2.5) -- (1.5,6.5);
\fill [fill=gray] (1.5,6.5) rectangle (2.5,7.5);

\draw [plineb] (0.5,8.5) -- (8.5,8.5);
\draw [plineb] (0.5,7.5) -- (8.5,7.5);
\draw [plineb] (0.5,6.5) -- (7.5,6.5);
\draw [plineb] (0.5,5.5) -- (6.5,5.5);
\draw [plineb] (0.5,4.5) -- (6.5,4.5);
\draw [plineb] (0.5,3.5) -- (5.5,3.5);
\draw [plineb] (0.5,2.5) -- (4.5,2.5);

\draw [plineb] (0.5,8.5) -- (0.5,2.5);
\draw [plineb] (1.5,8.5) -- (1.5,2.5);
\draw [plineb] (2.5,8.5) -- (2.5,2.5);
\draw [plineb] (3.5,8.5) -- (3.5,2.5);
\draw [plineb] (4.5,8.5) -- (4.5,2.5);
\draw [plineb] (5.5,8.5) -- (5.5,3.5);
\draw [plineb] (6.5,8.5) -- (6.5,4.5);
\draw [plineb] (7.5,8.5) -- (7.5,6.5);
\draw [plineb] (8.5,8.5) -- (8.5,7.5);

\draw [post,ultra thick,white] (2,7) -- (2,6) -- (3,6) -- (3,5) -- (4,5);

\node at (1.75,7.25) {$\scriptstyle x_i$};
\node at (4,5) {$\scriptstyle b$};

\node at (3.75,6.25) {$\scriptstyle x_j$};
\node at (5,6) {$\scriptstyle b+1$};
\end{tikzpicture}
};

\node (b) at (8.2,0) {
\begin{tikzpicture}[scale=0.7]
\fill [fill=lightgray] (3.5,6.5) -- (4.5,6.5) -- (4.5,8.5) -- (8.5,8.5) -- (8.5,7.5) -- (7.5,7.5) -- (7.5,6.5) -- (6.5,6.5) -- (6.5,4.5) -- (5.5,4.5) -- (5.5,3.5) -- (4.5,3.5) -- (4.5,2.5) -- (3.5,2.5) -- (3.5,6.5); 
\fill [fill=lightgray] (1.5,6.5) -- (2.5,6.5) -- (2.5,8.5) -- (4.5,8.5) -- (4.5,6.5) -- (3.5,6.5) -- (3.5,2.5) -- (1.5,2.5) -- (1.5,6.5);
\fill [fill=gray] (1.5,6.5) rectangle (2.5,7.5);

\draw [plineb] (0.5,8.5) -- (8.5,8.5);
\draw [plineb] (0.5,7.5) -- (8.5,7.5);
\draw [plineb] (0.5,6.5) -- (7.5,6.5);
\draw [plineb] (0.5,5.5) -- (6.5,5.5);
\draw [plineb] (0.5,4.5) -- (6.5,4.5);
\draw [plineb] (0.5,3.5) -- (5.5,3.5);
\draw [plineb] (0.5,2.5) -- (4.5,2.5);

\draw [plineb] (0.5,8.5) -- (0.5,2.5);
\draw [plineb] (1.5,8.5) -- (1.5,2.5);
\draw [plineb] (2.5,8.5) -- (2.5,2.5);
\draw [plineb] (3.5,8.5) -- (3.5,2.5);
\draw [plineb] (4.5,8.5) -- (4.5,2.5);
\draw [plineb] (5.5,8.5) -- (5.5,3.5);
\draw [plineb] (6.5,8.5) -- (6.5,4.5);
\draw [plineb] (7.5,8.5) -- (7.5,6.5);
\draw [plineb] (8.5,8.5) -- (8.5,7.5);

\draw [post,ultra thick,white] (2,7) -- (2,6) -- (3,6) -- (3,5) -- (4,5);

\node at (1.75,7.25) {$\scriptstyle x_i$};
\node at (4,5) {$\scriptstyle b+1$};

\node at (3.75,6.25) {$\scriptstyle x_j$};
\node at (5,6) {$\scriptstyle b$};
\end{tikzpicture}
};

\node at (0,-5) {
\begin{tikzpicture}[scale=0.7]
\fill [fill=lightgray] (3.5,6.5) -- (4.5,6.5) -- (4.5,8.5) -- (8.5,8.5) -- (8.5,7.5) -- (7.5,7.5) -- (7.5,6.5) -- (6.5,6.5) -- (6.5,4.5) -- (5.5,4.5) -- (5.5,3.5) -- (4.5,3.5) -- (4.5,2.5) -- (3.5,2.5) -- (3.5,6.5); 
\fill [fill=lightgray] (1.5,6.5) -- (2.5,6.5) -- (2.5,8.5) -- (4.5,8.5) -- (4.5,6.5) -- (3.5,6.5) -- (3.5,2.5) -- (1.5,2.5) -- (1.5,6.5);
\fill [fill=lightgray] (1.5,6.5) rectangle (2.5,7.5);
\fill [fill=gray] (1.5,7.5) rectangle (2.5,8.5);

\draw [plineb] (0.5,8.5) -- (8.5,8.5);
\draw [plineb] (0.5,7.5) -- (8.5,7.5);
\draw [plineb] (0.5,6.5) -- (7.5,6.5);
\draw [plineb] (0.5,5.5) -- (6.5,5.5);
\draw [plineb] (0.5,4.5) -- (6.5,4.5);
\draw [plineb] (0.5,3.5) -- (5.5,3.5);
\draw [plineb] (0.5,2.5) -- (4.5,2.5);

\draw [plineb] (0.5,8.5) -- (0.5,2.5);
\draw [plineb] (1.5,8.5) -- (1.5,2.5);
\draw [plineb] (2.5,8.5) -- (2.5,2.5);
\draw [plineb] (3.5,8.5) -- (3.5,2.5);
\draw [plineb] (4.5,8.5) -- (4.5,2.5);
\draw [plineb] (5.5,8.5) -- (5.5,3.5);
\draw [plineb] (6.5,8.5) -- (6.5,4.5);
\draw [plineb] (7.5,8.5) -- (7.5,6.5);
\draw [plineb] (8.5,8.5) -- (8.5,7.5);

\draw [post,ultra thick,white] (2,8) -- (4,8) -- (4,3);

%\node at (1.75,7.25) {$\scriptstyle x_i$};
\node at (4,6) {$\scriptstyle b$};

%\node at (3.75,6.25) {$\scriptstyle x_j$};
\node at (5,6) {$\scriptstyle b+1$};
\end{tikzpicture}
};

\node at (8.2,-5) {
\begin{tikzpicture}[scale=0.7]
\fill [fill=lightgray] (3.5,6.5) -- (4.5,6.5) -- (4.5,8.5) -- (8.5,8.5) -- (8.5,7.5) -- (7.5,7.5) -- (7.5,6.5) -- (6.5,6.5) -- (6.5,4.5) -- (5.5,4.5) -- (5.5,3.5) -- (4.5,3.5) -- (4.5,2.5) -- (3.5,2.5) -- (3.5,6.5); 
\fill [fill=lightgray] (1.5,6.5) -- (2.5,6.5) -- (2.5,8.5) -- (4.5,8.5) -- (4.5,6.5) -- (3.5,6.5) -- (3.5,2.5) -- (1.5,2.5) -- (1.5,6.5);
\fill [fill=lightgray] (1.5,6.5) rectangle (2.5,7.5);
\fill [fill=gray] (1.5,7.5) rectangle (2.5,8.5);

\draw [plineb] (0.5,8.5) -- (8.5,8.5);
\draw [plineb] (0.5,7.5) -- (8.5,7.5);
\draw [plineb] (0.5,6.5) -- (7.5,6.5);
\draw [plineb] (0.5,5.5) -- (6.5,5.5);
\draw [plineb] (0.5,4.5) -- (6.5,4.5);
\draw [plineb] (0.5,3.5) -- (5.5,3.5);
\draw [plineb] (0.5,2.5) -- (4.5,2.5);

\draw [plineb] (0.5,8.5) -- (0.5,2.5);
\draw [plineb] (1.5,8.5) -- (1.5,2.5);
\draw [plineb] (2.5,8.5) -- (2.5,2.5);
\draw [plineb] (3.5,8.5) -- (3.5,2.5);
\draw [plineb] (4.5,8.5) -- (4.5,2.5);
\draw [plineb] (5.5,8.5) -- (5.5,3.5);
\draw [plineb] (6.5,8.5) -- (6.5,4.5);
\draw [plineb] (7.5,8.5) -- (7.5,6.5);
\draw [plineb] (8.5,8.5) -- (8.5,7.5);

\draw [post,ultra thick,white] (2,8) -- (4,8) -- (4,6) -- (6,6);

%\node at (1.75,7.25) {$\scriptstyle x_i$};
\node at (4,6) {$\scriptstyle b$};

%\node at (3.75,6.25) {$\scriptstyle x_j$};
\node at (4,5) {$\scriptstyle b+1$};
\end{tikzpicture}
};

\node at (a.north) [above] {$T_k$};
\node at (b.north) [above] {$T_k^*$};

\end{tikzpicture}
}

\begin{document}

%Include the content
%conent.tex - 
\newbox\Adr
\setbox\Adr\vbox{
\centerline{\sc Christoph Neumann*, Robin Sulzgruber$^{\dagger}$}
\vskip18pt
\centerline{Faculty of Mathematics,
University of Vienna}
}

\title[Novelli--Pak--Stoyanovskii complexity]{A complexity theorem for the Novelli--Pak--Stoyanovskii algorithm}
\author[Christoph Neumann, Robin Sulzgruber]{\box\Adr}
\address{Faculty of Mathematics, University of Vienna, Oskar-Morgenstern-Platz~1, A-1090 Vienna, Austria}
\thanks{* Supported by the Austrian Science Foundation FWF grant Z130-N13.
\thanks{{\textdagger} Supported by the Austrian Science Foundation FWF, grant S50-N15 in the framework of the Special Research Program ``Algorithmic and Enumerative Combinatorics''.}}
\begin{abstract}
We describe two aspects of the behaviour of entries of Young tableaux during the application of the Novelli--Pak--Stoyanovskii algorithm. We derive two theorems which both imply a generalised version of a conjecture by Krattenthaler and Müller concerning the complexity of the Novelli--Pak--Stoyanovskii algorithm.
% as corollary. 
\end{abstract}
\maketitle

\secA{Introduction}\label{sec.intro}

% The Novelli--Pak--Stoyanovskii bijection \cites{PakSto1992,NPS} is known as an elegant way to prove the hook-length formula \cite{FRT} which counts the number of standard Young tableaux of a given shape $\lambda$. The bijection contains a sorting algorithm. If the tableau contains $n$ cells then this sorting algorithm transforms a permutation of $\{1,2,\dots,n\}$ into a standard Young tableau of shape $\lambda$, and each standard Young tableau is hit by the same number of permutations. Thus, this sorting algorithm can be used as a random generation algorithm of standard Young tableaux of shape $\lambda$. Motivated by this observation, Krattenthaler and Müller defined the complexity of this sorting algorithm as the average runtime of the sorting algorithm. They conjectured that the Novelli--Pak--Stoyanovskii algorithm has the same complexity no matter whether it is applied row-wise or column-wise.

An elegant proof of the hook-length formula \cite{FRT}, which counts the number of standard Young tableaux of a given shape $\lambda$, relies on the well-known Novelli--Pak--Stoyanovskii bijection \cites{PakSto1992,NPS}. 
% The Novelli--Pak--Stoyanovskii bijection \cites{PakSto1992,NPS} is known as an elegant way to prove the hook-length formula \cite{FRT} which counts the number of standard Young tableaux of a given shape $\lambda$. 
The bijection contains a sorting algorithm. If the partition $\lambda$ contains $n$ cells then this sorting algorithm transforms a permutation of $\{1,2,\dots,n\}$ into a standard Young tableau of shape $\lambda$ by means of a modified jeu de taquin, and each standard Young tableau arises from 
% is hit by
the same number of permutations. Krattenthaler and Müller defined the complexity of this sorting algorithm as its average running time. They conjectured that the Novelli--Pak--Stoyanovskii algorithm has the same complexity regardless of whether it is applied row-wise or column-wise. The complexity of this algorithm is of particular interest since the algorithm also serves as a random generator of standard Young tableaux of a given shape. For recent developments in the broader field of random properties of jeu de taquin and RSK-like algorithms the reader is referred to \cites{BFP,BGR2010,RomSni2013} and the references therein.

We consider a generalised version of the Novelli--Pak--Stoyanovskii algorithm, where the sorting order is given by an arbitrary standard Young tableau. We find that any two algorithms have the same complexity whenever they produce each standard Young tableau equally often. Since the row-wise and the column-wise Novelli--Pak--Stoyanovskii algorithms satisfy this condition, the conjecture follows. The proof relies on a recursion for exchange numbers from which one can calculate the complexity.

%In order to construct this recursion we consider how often two entries are exchanged with each other (counting all appearances while applying the sorting algorithm to all tableaux of a given shape). We find the surprising result that an entry is exchanged equally often with all greater entries -- even stronger these exchanges happen at the same positions in the Young diagram.
%
Motivated by earlier attempts to prove the conjecture of Krattenthaler and Müller we then derive a further, somewhat surprising result concerning the function that encodes the positions of the entries as they reach their maximal distance from the top left corner during the application of the sorting algorithm. This function, called the drop function, is similar to the complexity in that two algorithms have the same drop function whenever they produce each standard Young tableau equally often.
% Motivated by earlier attempts to prove the conjecture of Krattenthaler and Müller, we derive another surprising result. Namely, that the function encoding the positions of the entries, when they reach their maximal distance to the top left corner during the application of the sorting algorithm, fulfils the same property as the complexity.

In \refs{sec.alg} we review some basic definitions concerning partitions and tableaux, and explain the sorting algorithm. We present the complexity and the conjecture in \refs{sec.comp}. In \refs{sec.proof} we derive the recursion for the exchange numbers and the complexity theorem that implies the conjecture. In \refs{sec.more} we analyse the extremal positions of the entries during the application of the sorting algorithm and find a different recursion, which, whilst also implying the conjecture, is interesting in its own right. Finally, we make a few remarks and give an example in \refs{sec.rem}.

\secA{The Novelli--Pak--Stoyanovskii algorithm}\label{sec.alg}

In this section we recall some definitions and present the Novelli--Pak--Stoyanovskii algorithm. The algorithm can also be found in standard literature such as \cite{Knuth1998}*{page~70, Exercise~39} and \cite{Sagan}*{Section~3.10}.

Let $n \in \mathbb N$, and $\lambda$ be a \defstil{partition} of $n$, that is, a weakly decreasing sequence $(\lambda_1,\lambda_2,\dots)$ of nonnegative integers such that $\sum_{i=1}^{\infty}\lambda_i=n$. We identify $\lambda$ with its \defstil{Young diagram} $\{(i,j):\;i,j\in\N,\;1\leq i,\;1\leq j\leq\lambda_i\}$. We adopt the English convention of visualising a Young diagram by arranging cells like entries of a matrix such that $\lambda$ appears as a left justified array of rows with the $i$-th row containing $\lambda_i$ cells (see \reff{partition}). The \defstil{conjugate} partition $\lambda'$ of $\lambda$ corresponds to the Young diagram $\{(j, i):\;(i,j)\in\lambda\}$. Alternatively, $\lambda'=(\lambda_1',\lambda_2',\dots)$ where $\lambda_i'=\max\{j:\;\lambda_j\geq i\}$.

A \defstil{(Young) tableau} of shape $\lambda$ is a bijection $T:\lambda\to\{1,\dots,n\}$. We denote the set of all such tableaux by $\T(\lambda)$. We call $T(x)$ the \defstil{entry} of the cell $x\in\lambda$. Note that if $T\in\T(\lambda)$ is a tableau and $\sigma\in\S_n$ is a permutation, then $\sigma\circ T$ is again a tableau in $\T(\lambda)$. For convenience, we denote $T(i,j)\coloneqq T((i,j))$. Furthermore, a tableau $T\in\T(\lambda)$ is called a \defstil{standard Young tableau} if $T$ is increasing along rows from left to right, and along columns from top to bottom. That is, for each cell $(i,j)$ of $\lambda$ we have $T(i,j)<T(i+1,j)$ and $T(i,j)<T(i,j+1)$ whenever $(i+1,j)$ and $(i,j+1)$ respectively are also cells of $\lambda$. We denote the set of standard Young tableaux by $\SYT(\lambda)$.

Let $x=(i,j)$ be a cell in the Young diagram $\lambda$. We recall the usual definitions of the \defstil{arm-length} $a_{\lambda}(x)\coloneqq\lambda_i-j$, the \defstil{leg-length} $l_{\lambda}(x)\coloneqq\lambda_j'-i$, the \defstil{arm-colength} $a'_{\lambda}(x)\coloneqq j-1$, and the \defstil{leg-colength} $l'_{\lambda}(x)\coloneqq i-1$ as the numbers of cells strictly to the right, below, to the left, and above $x$ respectively. Moreover, we define the \defstil{hook length} as $h_{\lambda}(x)\coloneqq a_{\lambda}(x)+l_{\lambda}(x)+1$ and the \defstil{height} as $h'_{\lambda}(x)\coloneqq a'_{\lambda}(x)+l'_{\lambda}(x)$. We denote by $N_{\lambda}^-(x)\coloneqq\{(i-1,j),(i,j-1)\}\cap\lambda$ and $N_{\lambda}^+(x)\coloneqq\{(i+1,j),(i,j+1)\}\cap\lambda$ the sets of left and top, respectively bottom and right, neighbours of $x$ in $\lambda$. Lastly, given an entry $1\leq a\leq n$ and a tableau $T\in\T(\lambda)$ we use the notation $h'(a,T)\coloneqq h'_{\lambda}(T^{-1}(a))$.

% \myfig{partition}{We have a partition $\lambda = (5, 4, 2, 1, 1, 1)$, a cell $x = (2, 3) \in \lambda$ with $a_{\lambda}(x) = 1$, $l_{\lambda}(x) = 0$, $a'_{\lambda}(x) = 2$, $l'_{\lambda}(x) = 1$, $N_{\lambda}^-(x) = \{ (1, 3), (2, 2) \}$, $N_{\lambda}^+(x) = \{ (2, 4) \}$, and $\lambda' = (6, 3, 2, 2, 1)$.}{height=25mm}{ht}
\begin{figure}[t]
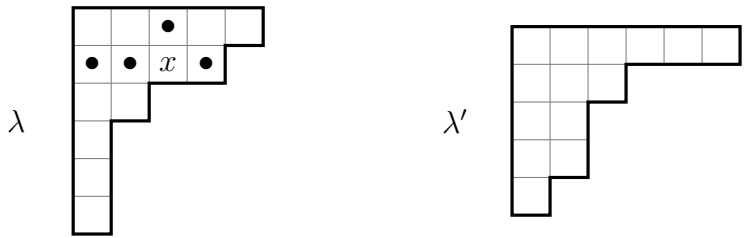

\centering
\FIGpartition
\caption{\small{We have a partition $\lambda = (5, 4, 2, 1, 1, 1)$, a cell $x = (2, 3) \in \lambda$ with $a_{\lambda}(x) = 1$, $l_{\lambda}(x) = 0$, $a'_{\lambda}(x) = 2$, $l'_{\lambda}(x) = 1$, $N_{\lambda}^-(x) = \{ (1, 3), (2, 2) \}$, $N_{\lambda}^+(x) = \{ (2, 4) \}$, and $\lambda' = (6, 3, 2, 2, 1)$.}}
\label{fig:partition}
\end{figure}

Since $T$ is a bijection we have $\abs{\T(\lambda)}=\abs{\S_n}=n!$. On the other hand, a classical result states that $f_\lambda\coloneqq\abs{\SYT(\lambda)}$ is given by the hook-length formula
\[
f_{\lambda} =  \frac {n!}{\prod_{x \in \lambda} h_{\lambda}(x)}.
\]
Novelli, Pak and Stoyanovskii prove this formula bijectively in \cite{NPS}. They define a hook function of shape $\lambda$ to be a map $H:\lambda\to\mathbb\Z$ such that $-l_{\lambda}(x)\leq H(x)\leq a_{\lambda}(x)$ for every $x\in\lambda$. Given a tableau $T\in\T(\lambda)$ Novelli, Pak and Stoyanovskii construct a standard Young tableau $W\in\SYT(\lambda)$ and a hook function $H$ on $\lambda$ such that $T$ can be recovered from the pair $(W,H)$. Since the number of hook functions of shape $\lambda$ equals $\prod_{x\in\lambda}h_{\lambda}(x)$, the hook-length formula follows.

Let $\varphi$ denote the map $T\mapsto W$ used by Novelli, Pak and Stoyanovskii. This map is given by a simple sorting algorithm which is a variation of the jeu de taquin. At each step the algorithm exchanges the entries of two adjacent cells in $\lambda$. We are interested in the average number of steps needed to transform a tableau into a standard Young tableau. We briefly recall the algorithm, which is illustrated in \reff{colalg}, in the following paragraphs. First however, we note that for all $W\in\SYT(\lambda)$ we have
\[
\abs{\varphi^{-1}(W)}=\frac{n!}{f_{\lambda}}.
\]

We impose the lexicographic order on the cells of $\lambda$ by letting $(i,j)\prec(k,l)$ if $j<l$ or $j=l$ and $i<k$. Now, set $T_0=T$.

If $T_s$ is not already a standard Young tableau, the algorithm turns to the maximal cell $x$ of $\lambda$ with respect to $\prec$ such that $N_{\lambda}^+(x)\neq\emptyset$ and $T_s(x)>\min\{T_s(y):\;y\in N_{\lambda}^+(x)\}$. We define a new tableau $T_{s+1}\coloneqq\sigma\circ T_s$, where
\[
\sigma=\big(T_s(x),\min\{T_s(y):\;y\in N_{\lambda}^+(x)\}\big)\in\S_n
\]
is a transposition. That is, we exchange the entry of $x$ with the minimal entry among its bottom and right neighbours.

It is clear that the algorithm terminates after yielding a finite sequence $(T_0, \dots, T_r)$ of $r + 1$ tableaux such that $W=T_r$ is a standard Young tableau.

% \myfig{colalg}{We have \mbox{$W = T_6 = (3,4)(1,4)(1,7)(6,5)(2,6)(2,5) \circ T$}.}{height=35mm}{ht}
\begin{figure}[t]
\centering
\FIGnps
\caption{\small{We have \mbox{$W = T_6 = (3,4)(1,4)(1,7)(5,6)(2,6)(2,5) \circ T$}.}}
\label{fig:colalg}
\end{figure}

Before we make a few observations about the nature of the Novelli--Pak--Stoyanovskii algorithm, let us consider a slightly more general setting. Clearly, for any linear order on the cells of $\lambda$ an analogous sorting algorithm can be defined. The set of linear orders on $\lambda$ can be identified with the set $\T(\lambda)$ in the following way: for each $U\in\T(\lambda)$ let
\[
x\prec_U y\deff U(x)<U(y).
\] 
By this definition $U$ is a standard Young tableau if and only if the corresponding order $\prec_U$ refines both the partial row-wise order (given by $(i,j)\prec(k,l)$ if $i=k$ and $j<l$), and the partial column-wise order (given by $(i,j)\prec(k,l)$ if $i<k$ and $j=l$). The left hand side of \reff{order} shows the standard Young tableau that gives the order used in \reff{colalg}.

From now on we will only consider orders $\prec_U$ with $U\in\SYT(\lambda)$. 

Note that there is a unique standard Young tableau $U \in \SYT(\lambda)$ such that $U(i+1,j)=U(i,j)+1$ whenever $(i,j)$ and $(i+1,j)$ are cells of $\lambda$. We call this induced order the \defstil{linear column-wise} order, which is precisely the order used by Novelli, Pak and Stoyanovskii. Analogously, there is a unique $U \in \SYT(\lambda)$ such that $U(i,j+1)=U(i,j)+1$ whenever $(i,j)$ and $(i,j+1)$ are cells of $\lambda$. We call this induced order the \defstil{linear row-wise} order.

% \myfig{order}{The linear column-wise (left) and row-wise (middle) orders on $\lambda=(3,3,1)$, and a mixed order on $\lambda=(4,3,2)$ (right).}{height=15mm}{hb}

\begin{figure}[hb]
\centering
\FIGorder
\caption{\small{The linear column-wise order (left) and the linear row-wise order (right) on $\lambda=(3,3,1)$.}}
\label{fig:order}
\end{figure}

We make the following definition.

\begin{defn}[Novelli--Pak--Stoyanovskii algorithm]\label{def:nps}
Let $n\in\N$, $\lambda$ be a partition of $n$, and $U\in\SYT(\lambda)$. For each $T\in\T(\lambda)$ let $\bphi_U(T) \coloneqq (T_0,T_1,\dots,T_r)$ where the tableaux $T_i$ arise from the algorithm with respect to $\prec_U$ as described above. We call the map $\bphi_U$ the \defstil{Novelli--Pak--Stoyanovskii algorithm corresponding to} $U$.

Accordingly, we denote $\varphi_U:\T(\lambda)\to\SYT(\lambda),\;T\mapsto T_r$. Note that $r$ is the number of steps the Novelli--Pak--Stoyanovskii algorithm needs to sort $T$ and thus depends on the tableau $T$. To make this explicit, we denote this number by $r_U(T)$.

Furthermore, we call the algorithm with respect to the linear column-wise order the \defstil{column-wise} Novelli--Pak--Stoyanovskii algorithm (see \reff{colalg}). Analogously, we define the \defstil{row-wise} Novelli--Pak--Stoyanovskii algorithm to be the algorithm with respect to the linear row-wise order.
\end{defn}

\begin{rem}\label{rem:0}
Let $U\in\SYT(\lambda)$, $T\in\T(\lambda)$, and $\bphi_U(T) = (T_0, \dots, T_{r_U(T)})$ be given by the corresponding Novelli--Pak--Stoyanovskii algorithm.

Two tableaux $T_{i-1}$ and $T_i$ differ in exactly two neighbouring cells $x$ and $y$. The transition $T_{i-1}\to T_i$ is given by the transposition $\tau_U(i,T)=(T_i(x),T_i(y))$. It follows that
\begin{align}
\label{eq:phiT}
\varphi_U(T)=\tau_U(r_U(T),T)\cdots\tau_U(1,T)\circ T.
\end{align}

Suppose $\tau_U(i,T)=(a,b)$ for some $1\leq a<b \leq n$. Then we have 
\begin{align}
\label{eq:h'}
h'(a,T_i)=h'(a,T_{i-1})-1,
\end{align}
and equivalently, $h'(b,T_i)=h'(b,T_{i-1})+1$.
\end{rem}

Next we take a closer look at the behaviour of the entries. To do so we introduce some more notation.

\begin{defn}
For every $x=(i,j)\in\lambda$ we refer to the area weakly to the right of $x$ \emph{and} weakly below $x$ as the \defstil{dropping zone} of $x$ in $\lambda$, and denote it by
\[
\out{x}=\{(k,l)\in\lambda:\;k\geq i,l\geq j\}.
\]
For any tableau $T\in\T(\lambda)$ and a subset $I \subseteq \lambda$ we say \defstil{$T$ is ordered on $I$} if $T(x)\leq\min\{T(y): y\in N_{\lambda}^+(x)\}$ for all $x \in I$ with non-empty $N_{\lambda}^+(x)$.
\end{defn}

Since $U$ is a standard Young tableau it follows that $\prec_U$ refines the partial column-wise and row-wise orders, thus we have $x\prec_U y$ for all $y\in\out{x}-\{x\}$. Hence, if $T_i(x)\neq T_{i-1}(x)$ for a cell $x$, then $T_{i-1}$ must be ordered on $\out{x}-\{x\}$.

\begin{rem} \label{rem:2}
We want to show that all exchanges of any fixed entry $b$ with an entry less than $b$ occur consecutively.

Let $x_1\prec_U x_2\prec_U\dots\prec_U x_n$ be the cells of $\lambda$, and $b=T(x_s)$ be the entry of the cell $x_s$.
Choose $0\leq i\leq r_U(T)$ minimal such that $T_{i}$ is ordered on $\{x_{s+1},\dots,x_{n}\}$, and thus in particular on $\out{x_s}-\{x_s\}$. We distinguish two cases.

Firstly, assume that $T_{i}$ is ordered on $\{x_s,\dots,x_n\}$. Then we have $T_i(y)>b$ for all $y\in\out{x_s}-\{x_s\}$. Clearly, no entry less than $b$ can be exchanged into $\out{x_s}$ thereafter. Moreover, suppose that $b$ is at some later point exchanged with the entry $T_j(x')$, then also $T_j(y)>b$ for all $y\in\out{x'}-\{x'\}$. It follows that $b$ cannot be exchanged with an entry less than $b$ for the rest of the sorting procedure.

Secondly, assume that $T_i$ is not ordered on $\{x_s\}$. Then $\tau_U(i+1,T)=(a_1,b)$ for some $1\leq a_1<b$, where $a_1$ is the entry of a bottom or right neighbour of $x_s$. Once again there are two possibilities. If $T_{i+1}$ is ordered on $\{x_s,\dots,x_n\}$ then, due to similar arguments as in the first case, $b$ cannot be exchanged with a smaller entry throughout the rest of the sorting. Otherwise, $\tau_U(i+2,T)=(a_2,b)$ for some $a_1<a_2<b$. Note that $a_1<a_2$ because $T_i$ is ordered on $\out{x_s}-\{x_s\}$.

Iterating this argument, we see that all transpositions that exchange the entry $b$ with an entry less than $b$ are processed consecutively, and in increasing order with respect to the entry less than $b$. Informally, we also say the entry $b$ drops, since each such exchange moves $b$ away from the top left corner of $\lambda$.

Now, let $b_s\coloneqq T(x_s)$ for $1\leq s\leq n$. Using the above observation we can divide $(T_1,\dots,T_r)$ into successive (possibly empty) subsequences $\bphi_U(x_{s},T)$, such that each $T_j$ belonging to $\bphi_U(x_{s},T)$ differs from $T_{j-1}$ only by a transposition of $b_{s}$ and an entry less than $b_s$. That is, each subsequence $\bphi_U(x_s, T)$ describes the dropping of the entry $b_s$.

Moreover, the length of the sequence $\bphi_U(x_s,T)$ is given by $\mu_U(s,T)-\mu_U(s+1,T)$, where $\mu_U(s,T)$ denotes the minimal integer $i$ such that $T_i$ is ordered on $\{x_s,\dots,x_n\}$. Thus, the sequence $\bphi_U(x_s,T)$ is non-empty if and only if $\mu_U(s+1,T)<\mu_U(s,T)$.
\end{rem}

Looking back to \reff{colalg} for an example, we find that $\mu_U(7,T)=0$, $\mu_U(6,T)=1$, $\mu_U(5,T)=1$, $\mu_U(4,T)=3$, $\mu_U(3,T)=3$, $\mu_U(2,T)=4$, and $\mu_U(1,T)=6=r_U(T)$. Thus, the non-empty sequences $\bphi_U(x_s,T)$ correspond to the dropping of the entries $T(x_6)=5$, $T(x_4)=6$, $T(x_2)=7$, and $T(x_1)=4$.

\secA{Complexity and the Conjecture of Krattenthaler and Müller}\label{sec.comp}

In this section we present the conjecture that motivated the current work. In order to do so we first define the complexity of a Novelli--Pak--Stoyanovskii algorithm.
%Our main objective is the number of steps performed by a Novelli--Pak--Stoyanovskii algorithm. We shall call this number the complexity of the algorithm.

\begin{defn}[Complexity]
Let $n\in\N$, $\lambda$ be a partition of $n$ and $U\in\SYT(\lambda)$. The \defstil{complexity} of the corresponding Novelli--Pak--Stoyanovskii algorithm $\bphi_U$, denoted by $C(U)$, is defined to be the average number of transitions in the sequences $\bphi_U(T)$, where $T$ ranges over $\T(\lambda)$. That is,
\[
C(U)\coloneqq\frac{1}{n!}\sum_{T\in\T(\lambda)} r_U(T).
\]
\end{defn}

%We are now able to present a conjecture by Krattenthaler and Müller which was the motivation for the present work.

\begin{conj}[\textsc{Krattenthaler, Müller}]\label{conj} Let $n\in\N$, $\lambda$ be a partition of $n$, and $U, V \in\SYT(\lambda)$ be the standard Young tableaux defining the linear column-wise and linear row-wise orders on $\lambda$ respectively. Then we have
\[
C(U)=C(V).
\]
\end{conj}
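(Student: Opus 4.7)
The conjecture follows from a more general complexity theorem to be proved in \refs{sec.proof}: whenever $U,V\in\SYT(\lambda)$ satisfy $|\varphi_U^{-1}(W)|=|\varphi_V^{-1}(W)|$ for every $W\in\SYT(\lambda)$, one has $C(U)=C(V)$. Novelli, Pak and Stoyanovskii established that the column-wise map $\varphi_U$ has fibres of common size $n!/f_\lambda$; the analogous statement for the row-wise map on $\lambda$ follows by conjugating via $(i,j)\mapsto(j,i)$, which interchanges the row-wise order on $\lambda$ with the column-wise order on $\lambda'$. Thus the hypothesis of the complexity theorem is met by the pair of orders in \refc{conj}.

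To establish the complexity theorem, I would first re-express $C(U)$ through exchange numbers. By \refr{rem:2}, the sequence $\bphi_U(T)$ splits into drops $\bphi_U(x_s,T)$ in which the values exchanged with $b_s=T(x_s)$ are strictly increasing, and drops for distinct $s$ involve disjoint pairs. Hence each pair $(a,b)$ with $a<b$ appears at most once in $\bphi_U(T)$, and setting
\[
E_U(a,b)\coloneqq\#\{T\in\T(\lambda):(a,b)\text{ is exchanged in }\bphi_U(T)\},
\]
we obtain
\[
n!\cdot C(U)=\sum_{T\in\T(\lambda)}r_U(T)=\sum_{1\leq a<b\leq n}E_U(a,b).
\]

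The plan is then to derive a recursion for $E_U(a,b)$ whose inputs depend on $U$ only through the fibre cardinalities $|\varphi_U^{-1}(W)|$. The key observation is that, by \refr{rem:2}, the drop $\bphi_U(x_s,T)$ is determined solely by the configuration of $\out{x_s}$ at time $\mu_U(s+1,T)$, which in turn is produced by the preceding drops. Conditioning on $W=\varphi_U(T)$ and on the cell $x_s=T^{-1}(b)$, one may then express $E_U(a,b)$ by a double count that separates the sorting run before time $\mu_U(s+1,T)$ (where the relevant statistics are governed by an NPS sub-process on $\{x_{s+1},\ldots,x_n\}$) from the run after (where completion to $W$ is enumerated by a fibre count). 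Iterating this decoupling cell by cell should produce a recursion in which $U$ enters only via fibre sizes of $\varphi_U$ and of its restrictions to subdiagrams.

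The main obstacle is to ensure that the intermediate counts genuinely depend on $U$ only through such fibre sizes: the set $\{x_1,\ldots,x_s\}$ of cells still to be processed need not form a Young diagram, and intermediate tabloids are not uniformly distributed. I would address this by induction on $|\lambda|$, strengthening the statement to cover suitable partial fibre counts. Once the recursion is in place, $E_U(a,b)$ and $E_V(a,b)$ satisfy the same recursion with matching inputs whenever $|\varphi_U^{-1}(W)|=|\varphi_V^{-1}(W)|$ for all $W$, and summing over $a<b$ yields $C(U)=C(V)$.
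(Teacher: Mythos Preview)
Your high-level reduction matches the paper: reduce \refc{conj} to a complexity theorem asserting that $C(U)$ depends only on the distribution vector $Z_U=(|\varphi_U^{-1}(W)|)_W$, express $n!\,C(U)=\sum_{a<b}E_U(a,b)$ via exchange numbers, and note that both the row-wise and column-wise algorithms are uniformly distributed. All of that is correct and is exactly how the paper is organized.

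The gap is in how you propose to control $E_U(a,b)$. You aim for a recursion by splitting the run of $\bphi_U$ at $\mu_U(s+1,T)$ into a ``sub-process'' on $\{x_{s+1},\dots,x_n\}$ and a completion to $W$, then inducting on $|\lambda|$. You yourself flag the obstacle: $\{x_{s+1},\dots,x_n\}$ is generally not a Young diagram, and the intermediate tabloid at time $\mu_U(s+1,T)$ is not uniformly distributed over anything obvious; moreover the ``restrictions to subdiagrams'' whose fibre sizes you would need to feed into the recursion are not defined, and their fibre sizes need not be determined by $Z_U$ for $\lambda$. There is no evident strengthening that closes this loop, and the paper does not attempt anything of the sort.

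The paper's route bypasses this entirely with a single key lemma you are missing: for fixed $a$, the exchange number $E_U(a,b)$ is the \emph{same for every} $b>a$. This is proved by the involution $T\mapsto(b,b+1)\circ T$: up to the moment both $b$ and $b+1$ have finished dropping, the two runs differ only by the transposition $(b,b+1)$, so the dropping path of $b$ in $T$ coincides with that of $b+1$ in $(b,b+1)\circ T$. Writing $m_U(a)\coloneqq E_U(a,b)$ for any $b>a$, a simple height-bookkeeping argument (initial height minus terminal height equals net number of steps toward the corner) yields
\[
(n-b)\,m_U(b)=\alpha_\lambda-\omega_U(b)+\sum_{a=1}^{b-1}m_U(a),
\]
where $\alpha_\lambda$ is independent of $U$ and $\omega_U(b)=\sum_W z_U(W)\,h'(b,W)$ depends on $U$ only through $Z_U$. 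No induction on $|\lambda|$ and no sub-process analysis is needed. Your proposal would be complete once you replace the speculative double-count by this involution argument.
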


In other words, the row-wise and the column-wise Novelli--Pak--Stoyanovskii algorithms have the same complexity.

\begin{rem}\label{rem:equidist} Given a standard Young tableau $U\in\SYT(\lambda)$ we define the corresponding conjugate standard Young tableau $U'$, where $U'(j,i)\coloneqq U(i,j)$ for all $(i,j)\in\lambda$. We obtain $U'\in\SYT(\lambda')$. More precisely, this correspondence defines a bijection between $\SYT(\lambda)$ and $\SYT(\lambda')$.

Let $V\in\SYT(\lambda)$ denote the standard Young tableau defining the linear row-wise order on $\lambda$, then $V'$ induces the linear column-wise order on $\lambda'$. Thereby, also the row-wise algorithm has the property $\abs{\varphi_V^{-1}(W)} = n!/f_{\lambda}$ for all $W \in \SYT(\lambda)$.

% Additionally we want to consider orders defined by any standard Young tableau $U$ of the following form. At each step choose the top empty row or the leftmost empty column and fill it with the least possible entries (as in Figure \ref{fig:otherorder}).
Additionally we want to consider orders defined by any standard Young tableau $U$ which can be obtained by the following procedure. At each step choose the top empty row or the leftmost empty column and fill it with the least possible entries (as in Figure \ref{fig:otherorder}).
%At each step choose column or row. Depending on your choice, assign to the leftmost unlabelled cell of each row or to the topmost unlabelled cell of each column the minimal possible entry (see the right hand side of \reff{order}). 
% Each such order induces a sorting algorithm $\bphi_U$ such that $\abs{\varphi_U^{-1}(W)} = n!/f_{\lambda}$ for all $W \in \SYT(\lambda)$. 
Each such order $\prec_U$ induces a sorting algorithm with the property $\abs{\varphi_U^{-1}(W)} = n!/f_{\lambda}$ for all $W \in \SYT(\lambda)$. This can be seen from the fact that during the construction of the hook function after Novelli, Pak and Stoyanovskii, once a column of the tableau has been sorted the corresponding column of the hook function is no longer altered (see \cite{Sagan}*{Section 3.10}).

\begin{figure}[t]
\centering
\FIGotherorder
\caption{\small{A standard Young tableau defining an algorithm for which the complexity agrees with the column- or row-wise algorithm.}}
\label{fig:otherorder}
\end{figure}
\end{rem}

\secA{The proof}\label{sec.proof}

In this section we prove that a fixed entry is exchanged equally often with every greater entry and derive the recursion for the exchange numbers. The conjecture of Krattenthaler and Müller then follows.

From now on let $\bphi_U$ be the Novelli--Pak--Stoyanovskii algorithm corresponding to an arbitrary $U\in\SYT(\lambda)$. First, we observe that the transposition $(a,b)$ may occur at most once while sorting any fixed $T\in\T(\lambda)$.

We introduce a function that decides if there is an exchange of two entries at a given position. For $1\leq a,b\leq n$, $x,y\in\lambda$ and $T\in\T(\lambda)$ define
\[
m_U(a,b,x,y,T)\coloneqq\begin{cases}
1 &\quad \text{if }T_{i-1}(x)=a,T_{i-1}(y)=b,T_i(x)=b\text{, and}\\
  &\quad T_{i}(y)=a\text{ for some }1\leq i\leq r_U(T), \\
0 &\quad \text{otherwise},
\end{cases}
\]
where $\bphi_U(T)=(T_0,\dots,T_{r_U(T)})$, and 
\[
m_U(a,b,x,y)\coloneqq\sum_{T\in\T(\lambda)}m_U(a,b,x,y,T).
\]
Obviously, $m_U(a,b,x,y,T)$ and $m_U(a,b,x,y)$ both vanish unless $x$ and $y$ are neighbours. Next, we define similar functions that simply count whether $a$ and $b$ are exchanged during the algorithm without any condition on the involved cells. Let
\[
%m_U(a,b,T)\coloneqq\begin{cases} 1 &\quad\text{if there is an $1\leq i\leq r_U(T)$ such that }\tau_U(i,T)=(a,b), \\ 0 & \quad\text{else},\end{cases}
m_U(a,b,T)\coloneqq\sum_{x,y\in\lambda}m_U(a,b,x,y,T)
\]
and
\[
m_U(a,b)\coloneqq\sum_{T\in\T(\lambda)}m_U(a,b,T).
\]
Note that $m_U(a,b,T)\in\{0,1\}$ since $a$ and $b$ are exchanged at most once. Furthermore, we define the \defstil{exchange matrix} $M_U\coloneqq(m_{a,b})_{a,b}$ to be the $n\times n$-matrix with entries $m_{a,b}=m_U(a,b)$ when $1\leq a<b\leq n$ and $m_{a,b}=0$ otherwise. The essential insight of our proof is the fact that if one exchanges $a$ and $a+1$ in $T$, then up to the point when both entries $a$ and $a+1$ have dropped, the tableaux $T_i$ arising in the Novelli--Pak--Stoyanovskii algorithm differ at most by the transposition $(a,a+1)$. We use this fact to prove the following central proposition.

\begin{prop}\label{lemma} Let $n\in\N$, $\lambda$ be a partition of $n$ and $U\in\SYT(\lambda)$ a standard Young tableau. For all $a,b,c\in\N$ with $1\leq a<b\leq n$ and $1\leq a<c\leq n$, and all $x,y\in\lambda$ we have
\begin{align}\label{eq:mabxy}
m_U(a,b,x,y)=m_U(a,c,x,y).
\end{align}
Furthermore, we have the symmetry
\begin{align}\label{eq:msym}
m_U(a,b,x,y)=m_U(b,a,y,x).
\end{align}
Finally, we have
\begin{align}\label{eq:mab}
m_U(a,b)=m_U(a,c).
\end{align}
\end{prop}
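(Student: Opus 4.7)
The plan is to obtain \eqref{eq:msym} immediately from the symmetry of the defining condition, to reduce \eqref{eq:mabxy} to the consecutive case and prove that case via a bijection relying on a synchronisation lemma, and to deduce \eqref{eq:mab} by summing \eqref{eq:mabxy} over cells. Equation \eqref{eq:msym} holds per tabloid: the condition $T_{i-1}(x) = a$, $T_{i-1}(y) = b$, $T_i(x) = b$, $T_i(y) = a$ is invariant under swapping $(a, x)$ with $(b, y)$, so $m_U(a, b, x, y, T) = m_U(b, a, y, x, T)$ for every $T$, and the claim follows upon summation.

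For \eqref{eq:mabxy} it suffices, by iterating $b \mapsto b+1$, to prove $m_U(a, b, x, y) = m_U(a, b+1, x, y)$ whenever $a < b < n$. The candidate bijection is the involution $\pi \colon T \mapsto (b, b+1) \circ T$ on $\T(\lambda)$, which swaps the positions of $b$ and $b+1$. The crucial input is the \emph{synchronisation lemma} flagged in the paragraph preceding the proposition: for $T' = \pi(T)$, the sequences $\bphi_U(T)$ and $\bphi_U(T')$ agree step-by-step except for at most one extra $(b, b+1)$-exchange inserted on one side; more precisely, up to this single extra exchange the tabloids at matched steps differ only by the transposition $(b, b+1)$, and once $b$ and $b+1$ have both reached their final cells the two sequences coincide. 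I would prove this by induction on the step number via a case analysis according to whether the maximal unsorted cell and its minimum-bearing lower/right neighbour coincide with the current positions of $b$ or $b+1$. The delicate case is the configuration in which $b$ and $b+1$ momentarily occupy adjacent cells, where one of the runs performs a $(b, b+1)$-exchange that snaps both entries into their final cells, after which the two sequences re-synchronise.

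Granting the synchronisation lemma, any exchange $(a, b)$ at cells $(x, y)$ occurring in $\bphi_U(T)$ is not a $(b, b+1)$-exchange, so it must correspond to a step in $\bphi_U(T')$ at which the tabloid carries $a$ at $x$ and $b+1$ at $y$; the minimum among the lower/right neighbours of $y$ is still $a$, so the algorithm on $T'$ performs an exchange of $a$ and $b+1$ at $(x, y)$ at that step. The symmetric argument (swapping the roles of $b$ and $b+1$) gives the converse, so $m_U(a, b, x, y, T) = m_U(a, b+1, x, y, \pi(T))$ for every $T$; summing over $T$ yields the base case of \eqref{eq:mabxy}, and \eqref{eq:mab} then follows by summing \eqref{eq:mabxy} over all neighbouring pairs $(x, y)$.

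The main obstacle is the synchronisation lemma: proving that two algorithms launched on tabloids differing only in the cells of $b$ and $b+1$ never drift apart in any other entry requires a careful case analysis of each algorithm step, especially when $b$ and $b+1$ are momentarily at adjacent cells and one algorithm sees an unsorted cell invisible to the other. Once the lemma is established, the correspondence of exchanges, the iteration over consecutive pairs, and the summations are straightforward.
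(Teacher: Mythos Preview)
Your approach is essentially the paper's: the same involution $T\mapsto(b,b+1)\circ T$, the same reduction to $c=b+1$, and the same per-tabloid identity $m_U(a,b,x,y,T)=m_U(a,b+1,x,y,(b,b+1)\circ T)$. One caveat: your synchronisation lemma is overstated. After both $b$ and $b{+}1$ have finished dropping, the two runs need \emph{not} coincide; the paper explicitly says ``the rest of the sequences $\bphi_U(T)$ and $\bphi_U(T^*)$ may differ heavily'' (a larger entry dropping later may choose the neighbour holding $b$ in one run and the neighbour holding $b{+}1$ in the other). What is true, and all that is needed, is that $T_k=(b,b+1)\circ T^*_k$ for every $k\le\mu_U(i,T)$, with possibly one extra $(b,b+1)$-step appended on the $T^*$ side; since every exchange of $b$ or $b{+}1$ with a smaller entry $a$ occurs within this range, your deduction still goes through. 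Finally, the paper organises the synchronisation argument by the dropping phases of \refr{rem:2} rather than by step-by-step induction; this avoids the case analysis you anticipate, because within each phase the dropping path depends only on which neighbouring entries are smaller than the one dropping, a comparison insensitive to swapping $b$ and $b{+}1$.
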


Hence, for $1\leq a<b\leq n$ we denote the \defstil{exchange numbers} by
\[
m_U(a)=m_U(a,b),
\]
and the \defstil{local exchange numbers} by
\[
m_U(a,x,y)=m_U(a,b,x,y).
\]

\begin{proof}
The symmetry in \eqref{eq:msym} is evident. Moreover, \eqref{eq:mab} follows from \eqref{eq:mabxy} by summation over all pairs of cells $x,y\in\lambda$. To show \eqref{eq:mabxy} it suffices to consider the case $c=b+1$. Let $T\in\T(\lambda)$ and $x_1\prec_U x_2\prec_U\cdots\prec_U x_n$ be the cells of $\lambda$. Now, choose $1\leq i,j\leq n$ such that $T(x_i)=b$ and $T(x_j)=b+1$. Set $\sigma=(b,b+1)$ and $T^*=\sigma\circ T$. Without loss of generality we may assume that $i<j$. For convenience we denote by $T^*_k$ the tableaux that appear during the application of the Novelli--Pak--Stoyanovskii algorithm to $T^*$, and the corresponding transposition is denoted $\tau^*_k$.

Obviously, for $0\leq k\leq\mu_U(j+1,T)$ we have $T_k=\sigma\circ T^*_k$, since none of the involved transitions are influenced by the entries of $x_i$ or $x_j$. See \reff{exchange1}.

\begin{figure}[ht]
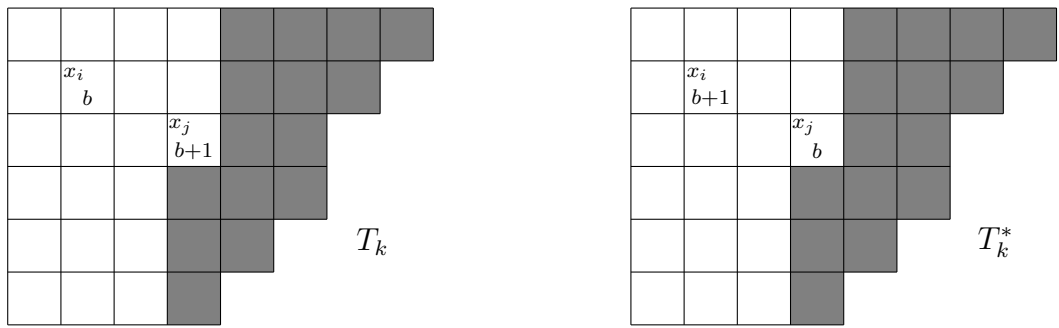

\centering
\FIGexchangeOne
\caption{\small{While $0\leq k\leq\mu_U(j+1,T)$, the entries $b,b+1$ are not moved.}}
\label{fig:exchange1}
\end{figure}

Now, we consider the dropping of the entry of $x_j$. Since all entries different from $b$ and $b+1$ are either less than both $b$ and $b+1$ or greater than both $b$ and $b+1$, the dropping path of the entry of $x_j$ does not depend on whether it is $b$ or $b+1$. Hence, also for $\mu_U(j+1,T)<k\leq\mu_U(j,T)$ we have $T_k=\sigma\circ T^*_k$. See \reff{exchange2}.

\begin{figure}[hb]
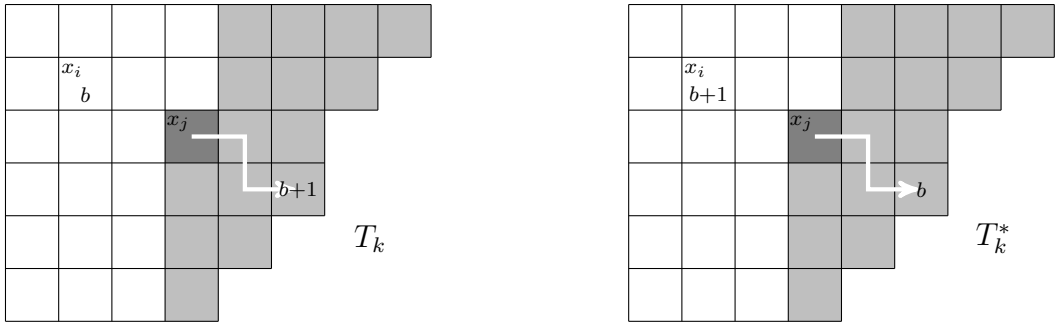

\centering
\FIGexchangeTwo
\caption{\small{While $\mu_U(j+1,T)<k\leq\mu_U(j,T)$, the first entry among $b$ and $b+1$ drops.}}
\label{fig:exchange2}
\end{figure}

By the same argument the dropping paths in $T$ and $T^*$ are the same for the initial entries of $x_l$, for all $x_i\prec_U x_l\prec_U x_j$. Hence also for $\mu_U(j,T)<k\leq\mu_U(i+1,T)$ we have $T_k=\sigma\circ T^*_k$. See \reff{exchange3}.

\begin{figure}[ht]
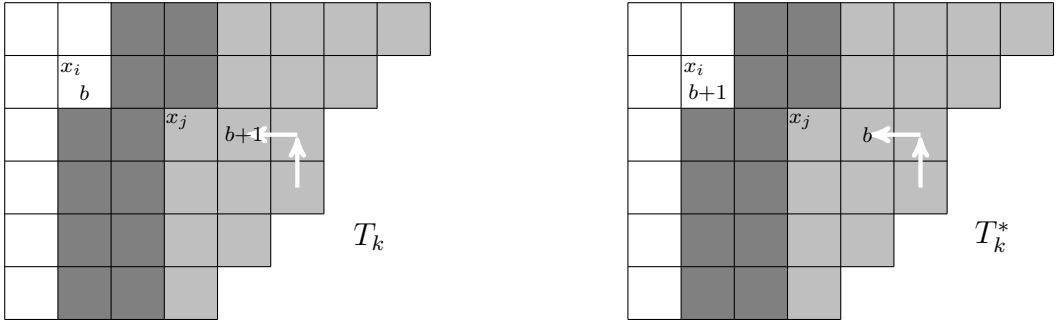

\centering
\FIGexchangeThree
\caption{\small{While $\mu_U(j,T)<k\leq\mu_U(i+1,T)$, it is possible that the entry among $b$ and $b+1$ which has already dropped can be move upwards again.}}
\label{fig:exchange3}
\end{figure}

Lastly, we consider the dropping of the initial entry of $x_i$. Since $T_{\mu_U(i+1,T)}=\sigma\circ T^*_{\mu_U(i+1,T)}$ the dropping paths will again agree, unless $b$ and $b+1$ are exchanged at some point (that is, if $\sigma$ occurs as transition). This situation may arise only if $\tau^*_{\mu_U(i,T)+1}=\tau^*_{\mu_U(i,T^*)}=\sigma$ (that is, the very last transition of the dropping of $b+1$ in $T^*$ may be $\sigma$). Hence, for $\mu_U(i+1,T)<k\leq\mu_U(i,T)$ we have once more $T_k=\sigma\circ T^*_k$. See Figures \ref{fig:exchange4} and \ref{fig:exchange5}.

\begin{figure}[hb]
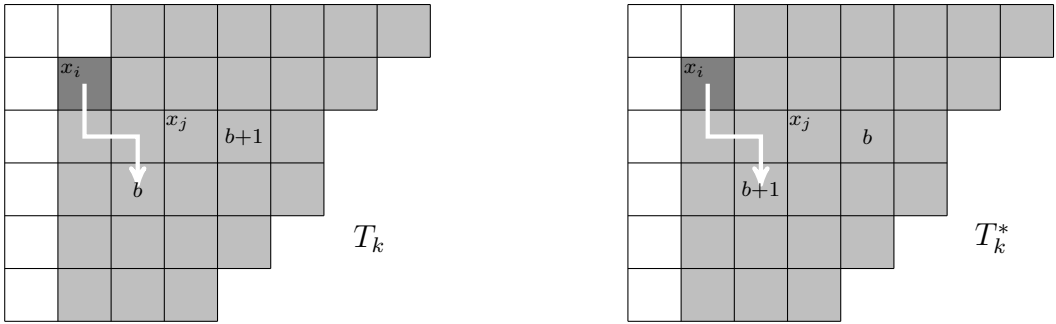

\centering
\FIGexchangeFour
\caption{\small{While $\mu_U(i+1,T)<k\leq\mu_U(i,T)$, the second entry among $b$ and $b+1$ drops.}}
\label{fig:exchange4}
\end{figure}

\begin{figure}[ht]
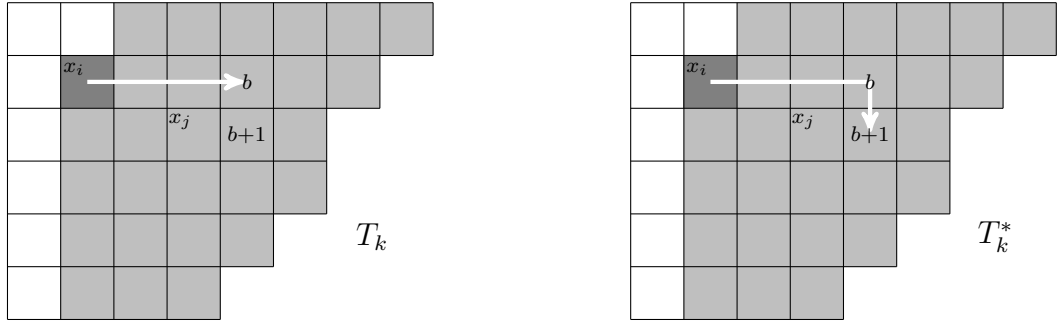

\centering
\FIGexchangeFive
\caption{\small{The only stage where the dropping paths may differ is the last exchange.}}
\label{fig:exchange5}
\end{figure}

To summarise, $T_k=\sigma\circ T^*_k$ for all $0\leq k\leq\mu_U(i,T)$. The rest of the sequences $\bphi_U(T)$ and $\bphi_U(T^*)$ may differ heavily (see \reff{exchange6}). However, we know that all transitions $\tau_k$ and $\tau^*_k$ that exchange $b$ or $b+1$ with an entry $a<b$ happen solely up to the index $\mu_U(i,T)$. Hence, the dropping path of $b$ in $T$ agrees exactly with the dropping path of $b+1$ in $T^*$ and the dropping path of $b+1$ in $T$ agrees exactly with the dropping path of $b$ in $T^*$. Therefore, we have
\[
m_U(a,b,x,y,T)=m_U(a,b+1,x,y,T^*)
\]
and
\[
m_U(a,b+1,x,y,T)=m_U(a,b,x,y,T^*).
\]
Since $T\mapsto\sigma\circ T$ is an involution, summation over $T\in\T(\lambda)$ yields \eqref{eq:mabxy}, and the proof is complete.
\end{proof}

\begin{figure}[ht]
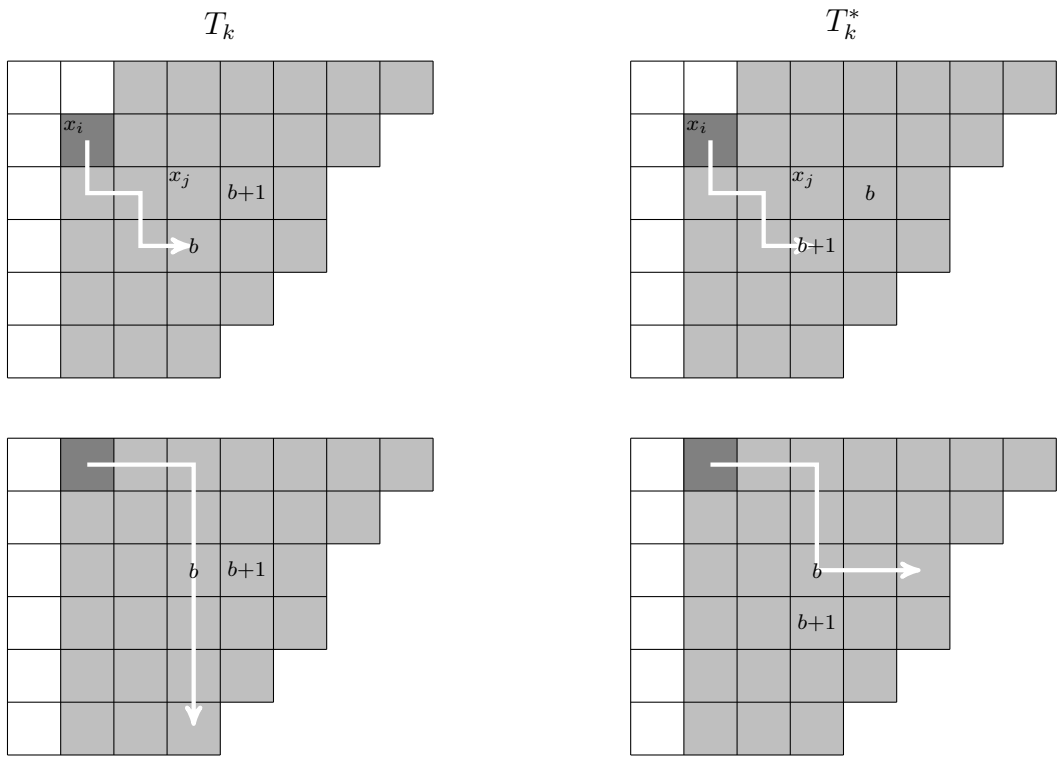

\centering
\FIGexchangeSix
\caption{\small{Suppose the entries $b$ and $b+1$ end up in diagonally adjacent positions after they have both dropped. Then exchanging $b$ and $b+1$ can divert a large entry dropping at a later point.}}
\label{fig:exchange6}
\end{figure}

Note that we can express the complexity in terms of exchange numbers in the following way
\[
C(U)=\frac{1}{n!}\sum_{a=1}^n (n-a)m_U(a).
\]

To state and prove the recursion for the exchange numbers we need some more notation.

\begin{defn}
Let $n\in\N$, $\lambda$ be a partition of $n$ and $U\in\SYT(\lambda)$. For $W\in\SYT(\lambda)$ we define the \defstil{multiplicity} of $W$ with respect to $U$ as
\[
z_U(W)\coloneqq\abs{\big\{T\in\T(\lambda):\;\varphi_U(T)=W\big\}}
\]
and the \defstil{distribution vector} of $U$ as
\[
Z_U\coloneqq\big(z_U(W)\big)_{W\in\SYT(\lambda)}.
\]
Moreover, we call $\bphi_U$ \defstil{uniformly distributed} if all entries of $Z_U$ agree. That is, for all $W\in\SYT(\lambda)$ we have
\[
z_U(W)=\frac{n!}{f_\lambda}.
\]
Before the application of the Novelli--Pak--Stoyanovskii algorithm every entry has a distance from the top left corner. Summing up these distances over all tableaux we define the \defstil{total initial height} of the entry $b$ as
\[
\alpha_\lambda(b)\coloneqq\sum_{T\in\T(\lambda)}h'(b,T).
\]
After the application the entry has taken its terminal position in a standard Young tableau with a (different) distance from the top left corner. Summing up these distances over all initial tableaux we define the \defstil{total terminal height} of the entry $b$ as
\[
\omega_U(b)\coloneqq\sum_{T\in\T(\lambda)}h'(b,\varphi_U(T)).
\]
\end{defn}

\begin{rem}\label{rem:omega}
The above parameter $\alpha_\lambda(b)$ does not depend on $b$, that is,
\begin{align*}
\alpha_\lambda(b)&=\sum_{T\in\T(\lambda)}h'(b,T)\\
&=\sum_{T\in\T(\lambda)}\bigg(\sum_{x\in\lambda,\,T(x)=b}h'_\lambda(x)\bigg)\\
&=\sum_{x\in\lambda}\bigg(\sum_{T\in\T(\lambda),\,T(x)=b}h'_\lambda(x)\bigg)\\
&=(n-1)!\sum_{x\in\lambda}h'_\lambda(x).
\end{align*}
Hence, we denote it simply by $\alpha_\lambda$. Note that we could calculate $\alpha_\lambda$ also as a sum of hook lengths $\alpha_\lambda=(n-1)!\left(-n+\sum_{x\in\lambda}h_\lambda(x)\right)$ or even in terms of $\lambda_i$ as $\alpha_\lambda=(n-1)!\sum_{i\in\N}\left(\tbinom{\lambda_i}{2}+(i-1)\lambda_i\right)$.

Furthermore, $\omega_U(b)$ does not depend on $\varphi_U$ but rather on $Z_U$, that is,
\begin{align*}
\omega_U(b)&=\sum_{T\in\T(\lambda)}h'(b,\varphi_U(T))\\
&=\sum_{W\in\SYT(\lambda)}\bigg(\sum_{T\in\T(\lambda),\,\varphi_U(T)=W}h'(b,W)\bigg)\\
&=\sum_{W\in\SYT(\lambda)}z_U(W)h'(b,W).
\end{align*}
\end{rem}

We are now in good shape to derive the aforementioned recursion.

\begin{thm}[Exchange numbers]\label{main}
Let $n\in\N$, $\lambda$ be a partition of $n$ and $U\in\SYT(\lambda)$. Then for $1\leq b\leq n$ we have the recursion
%Let $n\in\N$, $\lambda$ be a partition of $n$, $U\in\SYT(\lambda)$ and $\bphi_U$ the Novelli--Pak--Stoyanovskii algorithm corresponding to $U$. Then for $1\leq b\leq n$ we have the recursion
\begin{align}\label{eq:rec}
(n-b)\,m_U(b)=\alpha_\lambda-\omega_U(b)+\sum_{a=1}^{b-1}m_U(a).
\end{align}
\end{thm}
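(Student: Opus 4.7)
The plan is to interpret the recursion as a telescoping identity for the sum of heights of the fixed entry $b$ over the course of the sorting algorithm, and to collapse the resulting exchange counts via \refp{lemma}.

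First, I would fix $T \in \T(\lambda)$ and track the evolution of $h'(b, T_i)$ across the sequence $\bphi_U(T) = (T_0, \ldots, T_{r_U(T)})$. By \eqref{eq:h'}, a single transition $\tau_U(i,T) = (a,c)$ with $a < c$ changes $h'(b, \cdot)$ by $+1$ if $b = c$ (i.e.\ $b$ is the larger entry being moved down-right), by $-1$ if $b = a$ (i.e.\ $b$ is the smaller entry being lifted up-left), and by $0$ otherwise. Summing the contributions of all transitions and telescoping gives
\[
h'(b, \varphi_U(T)) - h'(b, T) = \sum_{a=1}^{b-1} m_U(a,b,T) - \sum_{c=b+1}^{n} m_U(b,c,T),
\]
using that, by \refp{lemma}, each transposition involving $b$ occurs at most once.

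Next I would sum this identity over all $T \in \T(\lambda)$. The left-hand side becomes $\omega_U(b) - \alpha_\lambda$ by the definitions (and the remark computing $\alpha_\lambda(b) = \alpha_\lambda$ independently of $b$). The right-hand side becomes $\sum_{a=1}^{b-1} m_U(a,b) - \sum_{c=b+1}^{n} m_U(b,c)$.

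Finally, invoking \eqref{eq:mab} in \refp{lemma}, we may replace each $m_U(a,b)$ with $a < b$ by $m_U(a)$ and each $m_U(b,c)$ with $c > b$ by $m_U(b)$; the latter sum has exactly $n-b$ terms, collapsing to $(n-b)\,m_U(b)$. Rearranging yields \eqref{eq:rec}. There is no genuine obstacle here: the proposition has already done the heavy lifting by making the exchange count depend only on the smaller index, so this theorem amounts to a height-accounting argument plus a summation. The only delicate point is bookkeeping the signs correctly — making sure that $b$ as the larger partner contributes $+1$ to $h'(b)$ and $b$ as the smaller partner contributes $-1$ — which is transparent from \eqref{eq:h'}.
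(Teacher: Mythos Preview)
Your proposal is correct and follows essentially the same approach as the paper: both arguments track the height $h'(b,\cdot)$ of the entry $b$ across the sorting sequence, telescope via \eqref{eq:h'} to express $\omega_U(b)-\alpha_\lambda$ as a signed sum of exchange counts, and then invoke \refp{lemma} to collapse $\sum_{c=b+1}^n m_U(b,c)$ to $(n-b)\,m_U(b)$ and $m_U(a,b)$ to $m_U(a)$. The only minor remark is that the fact ``each transposition involving $b$ occurs at most once'' is not part of \refp{lemma} itself but is observed earlier in the section; otherwise your write-up matches the paper's proof closely.
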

 
\begin{proof}
From \eqref{eq:h'} we conclude
\[
\alpha_\lambda+\sum_{a=1}^{b-1}m_U(a,b)-\sum_{c=b+1}^n m_U(b,c)=\omega_U(b),
\]
which says that the starting height plus the steps away from the top left corner minus the steps towards the top left corner equals the total terminal height. Using \eqref{eq:mab} we obtain
\[
\alpha_\lambda+\sum_{a=1}^{b-1}m_U(a)-\sum_{c=b+1}^n m_U(b)=\omega_U(b),
\]
and hence
\[
\alpha_\lambda+\sum_{a=1}^{b-1}m_U(a)-(n-b)m_U(b)=\omega_U(b).
\]
This concludes the proof.
\end{proof}

Note that for $b=1$ the sum on the right hand side of \eqref{eq:rec} is empty. Moreover, $\omega_U(1)=0$ since the entry $1$ will always end up in the top left corner of the standard Young tableau. The recursion, therefore, yields its own initial condition,
\[
m_U(1)=\frac{\alpha_\lambda}{n-1}.
\]
% Therefore, we can recursively compute the exchange matrix. We observe that the exchange matrix depends only on $\lambda$ and $Z_U$ rather than on $U$. We obtain the following corollaries.
We now deduce the main result of this paper.

\begin{cor}[Complexity Theorem]\label{complexitythm}
Let $n\in\N$, $\lambda$ be a partition of $n$ and $U,V\in\SYT(\lambda)$ such that $Z_U=Z_V$. Then we have
\[
C(U)=C(V).
\]
In particular, the row-wise and column-wise Novelli--Pak--Stoyanovskii algorithms have the same complexity, that is, the conjecture of Krattenthaler and Müller holds.
%In the situation of \reft{main} we can calculate the complexity of $\bphi_U$ as
%\[
%C(U)=\frac{1}{n!}\sum_{a=1}^n(n-a)m_U(a).
%\]
%Moreover, let $V\in\SYT$ be such that c. Then we have
%\[
%C(U)=C(V).
%\]
\end{cor}

\begin{proof} We observed that we can compute the complexity $C(U)$ from the exchange numbers $m_U(a)$, which may in turn be computed from the terminal heights $\omega_U(a)$ using the above recursion. By Remark \ref{rem:omega} the numbers $\omega_U(a)$ only depend on $\lambda$ and $Z_U$ rather than on $U$.

The claim follows since the row-wise and column-wise algorithms are both uniformly distributed (see Remark \ref{rem:equidist}).
\end{proof}

\begin{cor}
Let $n\in\N$, $\lambda$ be a partition of $n$ and $U\in\SYT(\lambda)$ such that $\bphi_U$ is uniformly distributed. Then we obtain the same recursion as in Theorem \ref{main} with the specialisation
\[
\omega_U(b)=\frac{n!}{f_\lambda}\sum_{W\in\SYT(\lambda)}h'(b,W).
\]
\end{cor}

% Since the row-wise and column-wise Novelli--Pak--Stoyanovskii algorithm are both uniformly distributed, we have in particular that the conjecture of Krattenthaler and Müller holds.

\secA{Intermediate targets of entries}\label{sec.more}

In this section we define the drop function that counts those tableaux in which a certain cell is the one farthest from the top left corner among all cells that contain a specific entry at some point during the application of the Novelli--Pak--Stoyanovskii algorithm. We also derive another recursion that implies this drop function depends only on $Z_U$ and not $U$.

As mentioned earlier, during the application of a Novelli--Pak--Stoyanovskii algorithm each entry first raises its height to a maximum and then lowers it to its final height. Therefore, for each entry $1 \leq b \leq n$ and tableau $T\in\T(\lambda)$, there is a unique cell of $\lambda$ with maximal height which contains $b$ at some point during the sorting of $T$. We denote this maximal height by $\beta_U(b, T)$.

Let $\bphi_U(T) = (T_0, \dots, T_{r_U(T)})$, and suppose $b = T(x_s)$ for some $1 \leq s \leq n$, where $x_1 \prec_U \cdots \prec_U x_n$ are the cells of $\lambda$ ordered with respect to $U$, then the maximal height is given by $\beta_U(b, T) = h'(b, T_{\mu_U(s, T)})$.

Summation over $T$ yields the statistic
\[
\beta_U(b)\coloneqq\sum_{T\in\T(\lambda)}\beta_U(b , T).
\]
Using the notation from \reft{main} we can immediately derive the relation
\[
C(U)= \frac{1}{n!} \sum_{b=1}^n\big(\beta_U(b) - \alpha_\lambda\big).
\]
To see this note that $\beta_U(b)-\alpha_\lambda$ counts the number of exchanges of the entry $b$ with a smaller entry. Each exchange is therefore counted exactly once.

Let $U,V\in\SYT(\lambda)$ be the standard Young tableaux defining the row-wise and the column-wise Novelli--Pak--Stoyanovskii algorithms. \refc{conj} would follow from $\beta_U(b)=\beta_V(b)$ for all entries $1 \leq b \leq n$.
% ??? Note that this follows from the fact, that the arising exchange matrices agree. Nevertheless, 
This approach raises a natural further question, namely, which cells will a given entry actually drop to? The rest of this section is devoted to answering this question.

\begin{defn}[Drop function]
Let $n\in\N$, $\lambda$ be a partition of $n$ and $U\in\SYT(\lambda)$ define a Novelli--Pak--Stoyanovskii algorithm $\bphi_U$. Given an entry $1 \leq b \leq n$, a cell $x \in \lambda$ and a tableau $T\in\T(\lambda)$ we define 
\[
d_U(b, x, T) \coloneqq \begin{cases} 1 \quad & \text{if }h'_{\lambda}(x) = \beta_U(b, T) \text{ and } T_i(x) = b \\ & \text{ for some } 1 \leq i \leq r_U(T), \\ 0 \quad & \text{otherwise,} \end{cases}
\]
where $\bphi_U(T) = (T_0, \dots, T_{r_U(T)})$.
The \defstil{drop function} $d_U(b,x)$ counts how often the entry $b$ drops to the cell $x$, when all tableaux in $T(\lambda)$ are considered, that is,
\[
d_U(b,x) \coloneqq \sum_{T \in \T(\lambda)} d_U(b, x, T).
\]
\end{defn}

Note that if $b$ is never exchanged with an entry less than $b$, then it drops to its starting position. In particular, $d_U(1,x)=(n-1)!$ for all $x\in\lambda$. Moreover, $\sum_{x\in\lambda}d_U(b,x)=n!$ for all $b$.
% Note that if $b$ is never exchanged with a label less than $b$, then it drops to its starting position. Thus, $\sum_{x\in\lambda}d_U(b,x)=n!$ for all $b$, and in particular $d_U(1,x)=(n-1)!$ for all $x\in\lambda$.

In order to calculate the drop function we need to define intermediate quantities which are suitable to construct a recursion.

\begin{defn}[Signed exit number]
Let $n\in\N$, $\lambda$ be a partition of $n$ and $U\in\SYT(\lambda)$. The \defstil{signed exit number} of the entry $b$ at the cell $x$ is defined as
% With the notation of \reft{main} the \defstil{signed exit number} of the entry $b$ at the cell $x$ is defined as
\[
\Delta_U(b,x)\coloneqq\sum_{y\in N^-(x)}m_U(b,x,y)-\sum_{y\in N^+(x)} m_U(b,y,x).
\]
Furthermore, let
\[
\omega_U(b, x, T)\coloneqq\begin{cases}1&\quad\text{if }\varphi_U(T)(x)=b,\\
0&\quad\text{otherwise}.
\end{cases}
\]
Then
\[
\omega_U(b, x) \coloneqq \sum_{T \in \T(\lambda)} \omega_U(b, x, T)
\]
counts the number of tableaux in which the terminal position of $b$ after the application of the sorting algorithm is $x$.
\end{defn}

\begin{thm}[Signed exit numbers]\label{thm:slope}
Let $n \in \mathbb N$, $\lambda$ be a partition of $n$ and $U \in \SYT(\lambda)$. For all cells $x\in\lambda$ and for all entries $1\leq b\leq n$ we have the recursion
\[
(n-b)\, \Delta_U(b,x)=(n-1)!-\omega_U(b,x)+\sum_{a=1}^{b-1}\Delta_U(a,x).
\]
\end{thm}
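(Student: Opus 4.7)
My plan is to imitate the global height-conservation argument used in the proof of \reft{main}, but \emph{localized} at the single cell $x$ rather than aggregated over all of $\lambda$. For each tabloid $T \in \T(\lambda)$, I would track along the sequence $\bphi_U(T) = (T_0, \dots, T_{r_U(T)})$ whether the entry at $x$ equals $b$. This boolean starts as $[T(x)=b]$ and ends as $[\varphi_U(T)(x)=b]$; it can change only when $b$ is swapped into or out of $x$. Telescoping this change and summing over all $T \in \T(\lambda)$ yields the conservation identity
\[
(n-1)! + A - D = \omega_U(b,x),
\]
where $A$ and $D$ are the total numbers of arrivals of $b$ at $x$ and departures of $b$ from $x$.

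Next I would split $A$ and $D$ into four subcases depending on whether $b$ is the smaller or the larger of the two swapped entries, and on whether the involved neighbour lies in $N^-(x)$ or $N^+(x)$. Since an NPS step processes the cell holding the larger entry and transfers it to a lower/right neighbour, these two choices are rigidly linked: $b$ is the smaller entry precisely when it moves upward across the exchange, i.e.\ when it departs from $x$ to some $y \in N^-(x)$ or arrives at $x$ from some $y \in N^+(x)$; in the remaining two cases $b$ is the larger one moving downward.

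Proposition~\ref{lemma} is then the key tool. In the two subcases in which $b$ is the smaller entry the companion $c$ ranges over all $c > b$, and by \eqref{eq:mabxy} each summand equals the local exchange number $m_U(b,\cdot,\cdot)$; hence the sum over $c$ collapses to a factor of $(n-b)$, and the signed difference of these two counts is precisely $(n-b)\Delta_U(b,x)$. In the two subcases in which $b$ is the larger entry, the companion $a$ ranges over $1 \leq a < b$, and the counts decompose as sums of local exchange numbers $m_U(a,\cdot,\cdot)$ whose signed difference equals $\sum_{a=1}^{b-1}\Delta_U(a,x)$ directly from the definition of $\Delta_U$. Substituting both into the conservation identity and rearranging produces the claimed recursion.

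The main obstacle is not a deep combinatorial one but rather the careful bookkeeping: one must verify that every transition at which $[T_i(x)=b]$ changes value belongs to exactly one of the four subcases, and that the four counts assemble with the right signs into $(n-b)\Delta_U(b,x)$ and $\sum_{a<b}\Delta_U(a,x)$. Once the matching is established and Proposition~\ref{lemma} is invoked to eliminate the non-minimum companion entry, the proof reduces to a straightforward rearrangement.
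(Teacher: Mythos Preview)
Your proposal is correct and follows essentially the same route as the paper: both arguments start from the local conservation identity $(n-1)!+(\text{arrivals of }b\text{ at }x)-(\text{departures of }b\text{ from }x)=\omega_U(b,x)$, split the arrivals and departures according to whether $b$ is the smaller or larger of the two swapped entries (which forces the neighbour to lie in $N^+(x)$ or $N^-(x)$ respectively), and then apply \refp{lemma} to collapse the sums into $-(n-b)\Delta_U(b,x)$ and $\sum_{a<b}\Delta_U(a,x)$. Your framing in terms of tracking the boolean $[T_i(x)=b]$ and telescoping is perhaps a bit more explicit than the paper's, but the substance is identical.
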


\begin{proof}
Let $N(x)=N_{\lambda}^-(x)\cup N_{\lambda}^+(x)$ be the set of adjacent cells of $x$ in $\lambda$. Fix an entry $b$ and a cell $x$. The number of tableaux $T$ such that $\varphi_U(T)(x)=b$ is obtained by adding the number of tableaux in which $b$ starts in $x$ and the number of times $b$ is exchanged to $x$, and subtracting the number of times $b$ is exchanged away from $x$. That is,
\[
\omega_U(b,x) = (n-1)!+\sum_{a\neq b}\sum_{y\in N(x)}\Big(m_U(b,a,y,x)-m_U(b,a,x,y)\Big).
\]
Using \refp{lemma}, the double sum in the above equation becomes

\begin{align*}
&\sum_{a=1}^{b-1}\bigg(\sum_{y\in N^-_\lambda(x)}m_U(a,b,x,y)-\sum_{y\in N_\lambda^+(x)}m_U(a,b,y,x)\bigg)+\\
&\quad+\sum_{c=b+1}^n\bigg(\sum_{y\in N_\lambda^+(x)}m_U(b,c,y,x)-\sum_{y\in N_\lambda^-(x)}m_U(b,c,x,y)\bigg)\\
=&\sum_{a=1}^{b-1}\bigg(\sum_{y\in N_\lambda^-(x)}m_U(a,x,y)-\sum_{y\in N_\lambda^+(x)}m_U(a,y,x)\bigg)+\\
&\quad+\sum_{c=b+1}^n\bigg(\sum_{y\in N_\lambda^+(x)}m_U(b,y,x)-\sum_{y\in N_\lambda^-(x)}m_U(b,x,y)\bigg)\\
=&\sum_{a=1}^{b-1}\Delta_U(a,x)-(n-b)\Delta_U(b,x).
\end{align*}
This completes the proof.
\end{proof}

As before, this recursion generates its own initial condition. Hence we can recursively compute $\Delta_U(a,x)$ and use it to determine the drop function.

\begin{cor}[Drop Theorem]
With the notation of \reft{thm:slope}, the drop function can be derived from the signed exit numbers as
\[
d_U(b,x)=(n-1)!+\sum_{a=1}^{b-1}\Delta_U(a,x).
\]

Furthermore, if $U,V\in\SYT(\lambda)$ define equidistributed Novelli--Pak--Stoyanovskii algorithms, that is, $Z_U=Z_V$, then for all $x\in\lambda$ and $1\leq b\leq n$ we have $d_U(b,x)=d_V(b,x)$.
\end{cor}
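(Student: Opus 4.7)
The plan is to derive the formula by a flow conservation argument tracking the entry $b$ through the cell $x$, based on the trajectory analysis of \refr{rem:2}; the second assertion then follows by a direct induction from the recursion in \reft{thm:slope}.

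First I would fix $1\le b\le n$ and $x\in\lambda$. By \refr{rem:2}, in any tabloid $T$ the entry $b$ undergoes a contiguous \emph{dropping phase} consisting of all its exchanges with strictly smaller entries, followed possibly later in time by exchanges with strictly larger entries that move $b$ back toward the top-left corner. Each exchange in the dropping phase strictly increases the sum of the row and column indices of $b$'s position, so no cell is visited twice during that phase, and the drop position of $b$ is precisely the cell occupied when the phase ends. Consequently, in any $T$ the entry $b$ occupies $x$ at some moment of its dropping phase if and only if either $T(x)=b$, or $b$ enters $x$ from some $y\in N_\lambda^-(x)$ via a single dropping exchange; in either case, $b$ then either remains at $x$ for the rest of the dropping phase (so that $x$ is its drop position) or exits $x$ to some $y\in N_\lambda^+(x)$ via a further dropping exchange. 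A case analysis in each tabloid $T\in\T(\lambda)$ yields the indicator identity
\[
[T(x)=b]+[b\text{ enters }x\text{ in its dropping phase}]=d_U(b,x,T)+[b\text{ exits }x\text{ in its dropping phase}].
\]

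Summing this identity over $T\in\T(\lambda)$, the term $[T(x)=b]$ contributes $(n-1)!$, while a dropping exchange in which $b$ enters $x$ from $y\in N_\lambda^-(x)$ by swapping with some $a<b$ is counted precisely by $m_U(a,b,x,y)$, and one in which $b$ exits $x$ to $y\in N_\lambda^+(x)$ is counted by $m_U(a,b,y,x)$. Thus I would obtain
\[
(n-1)!+\sum_{y\in N_\lambda^-(x)}\sum_{a=1}^{b-1}m_U(a,b,x,y)=d_U(b,x)+\sum_{y\in N_\lambda^+(x)}\sum_{a=1}^{b-1}m_U(a,b,y,x).
\]
Applying \refp{lemma} to replace each $m_U(a,b,\cdot,\cdot)$ by the $b$-independent local exchange number $m_U(a,\cdot,\cdot)$, the $a$-th term assembles into $\Delta_U(a,x)$, and rearranging gives $d_U(b,x)=(n-1)!+\sum_{a=1}^{b-1}\Delta_U(a,x)$. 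For the second claim, I would note that $\omega_U(b,x)=\sum_{W\in\SYT(\lambda),\,W(x)=b}z_U(W)$ depends only on $Z_U$; the recursion in \reft{thm:slope} then determines $\Delta_U(a,x)$ inductively in $a$ from $\omega_U(a,x)$ and earlier values, so $Z_U=Z_V$ forces $\Delta_U=\Delta_V$ and hence $d_U=d_V$.

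The main subtlety will be setting up the per-tabloid flow identity cleanly: the monotonicity of the dropping phase must be used to ensure that the events "$b$ enters $x$" and "$b$ exits $x$" each occur at most once per tabloid, so that the indicator sums translate unambiguously into $m_U$-counts with the correct sign conventions. After this step, the rest is a direct manipulation using \refp{lemma} and \reft{thm:slope}.
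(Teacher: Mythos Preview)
Your proposal is correct and follows essentially the same approach as the paper's own proof: both set up a flow-conservation count at the cell $x$ for the dropping phase of $b$ (start-or-enter equals drop-or-exit), sum over $\T(\lambda)$ to express the enter/exit terms via the local exchange numbers $m_U(a,b,x,y)$ and $m_U(a,b,y,x)$, and then invoke \refp{lemma} to collapse these into $\Delta_U(a,x)$. Your derivation of the second claim via the $Z_U$-dependence of $\omega_U(b,x)$ and the recursion of \reft{thm:slope} is likewise identical to the paper's argument.
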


\begin{proof}
In order for the entry $b$ to drop to a cell $x$ it must either enter $x$ from $N^-_\lambda(x)$ or start in $x$, and additionally $b$ must never leave $x$ towards $N^+_\lambda(x)$. Summing over $T\in\T(\lambda)$ we find that $b$ starts at $x$ exactly $(n-1)!$ times, it enters $x$
\[
\sum_{T\in\T(\lambda)}\sum_{a=1}^{b-1}\sum_{y\in N_\lambda^-(x)}m_U(b,a,y,x,T)
\]
times from the left or from above, and leaves it
\[
\sum_{T\in\T(\lambda)}\sum_{a=1}^{b-1}\sum_{y\in N_\lambda^+(x)}m_U(b,a,x,y,T)
\]
times to the right or below. Hence we obtain
\begin{align*}
d_U(b,x)
% &=(n-1)!+\sum_{T\in\T(\lambda)}\sum_{a=1}^{b-1}\sum_{y\in N_\lambda^-(x)}m_U(b,a,y,x;T)\\
% &\phantom{=(n-1)!}-\sum_{T\in\T(\lambda)}\sum_{a=1}^{b-1}\sum_{y\in N_\lambda^+(x)}m_U(b,a,x,y;T)\\
&=(n-1)!+\sum_{a=1}^{b-1}\bigg(\sum_{y\in N_\lambda^-(x)}m_U(a,b,x,y)-\sum_{y\in N_\lambda^+(x)}m_U(a,b,y,x)\bigg) \\
&=(n-1)!+\sum_{a=1}^{b-1}\bigg(\sum_{y\in N_\lambda^-(x)}m_U(a,x,y)-\sum_{y\in N_\lambda^+(x)}m_U(a,y,x)\bigg) \\
&=(n-1)!+\sum_{a=1}^{b-1}\Delta_U(a,x).
\end{align*}
The second claim follows from the fact that $\omega_U(b, x)$ depends only on $Z_U$ rather than on $U$. Thus, by \reft{thm:slope}, the signed exit numbers and the drop function depend only on $Z_U$.
\end{proof}

\secA{Remarks}\label{sec.rem}

The above two sections both come to the conclusion that the objects of study (the complexity, the signed exit number and the drop function) depend on $Z_U$ rather than on $U$. The distribution vector appeared implicitly in earlier work by Fischer \cite{F}. We state her result below in the form of a remark.

\begin{rem}[Fischer 2002]
Let $n\in\N$, $\lambda$ be a partition of $n$ and $(z_U(W))_{U,W\in\SYT(\lambda)}$ be the matrix of multiplicities of $W$ with respect to $U$. Then $(z_U(W))_{U,W\in\SYT(\lambda)}$ is symmetric.
\end{rem}

Moreover, we would like to consider further possible generalisations.

\begin{rem}
As in \cite{F}, our arguments may be generalised to the skew and shifted case. But since the row-wise and column-wise orders may yield different distribution vectors, the conjecture of Krattenthaler and Müller does not apply. 
\end{rem}

We also observe the following property of the signed exit number.

\begin{rem}
The signed exit number $\Delta_U(b,x)$ measures the difference between how often $b$ leaves $x$ towards $N^-_\lambda(x)$ and how often it enters $x$ from $N^+_\lambda(x)$. Thus it measures how strong $x$ is as a source for pushing $b$ towards the top left corner, respectively how strongly $x$ acts as a sink if $\Delta_U(x,b)$ is negative. For our purposes (namely computing the drop function) it would be enough to consider it as a formal quantity. Nevertheless, we want to remark that summing over all $x\in\lambda$ it counts every exchange of $b$ once with positive and once with negative sign. Hence, we have for all $1\leq b\leq n$
\[
\sum_{x\in\lambda}\Delta_U(b,x)=0.
\]  
\end{rem}

In a special case we can actually compute the values of the drop function.

\begin{ex}[The drop function for a single lined Young diagram]
Let $n\in\N$ and consider the partition $\lambda=(n)$. We can treat a cell $x$ as a single index $1\leq x\leq n$. Moreover, there is only one standard Young tableau $U\in\SYT(\lambda)$. We shall therefore index all functions by $n$ instead of $U$. To calculate the drop function explicitly without using the signed exit numbers 
% from the previous section -- we chose this way of calculation because it gives room for another remark. 
we introduce the \defstil{partial drop function}, which counts the number of tableaux in which the entry $a$ drops from the starting position $x$ to $y$,
\[
d_{n}(a,x,y)=\abs{\big\{T\in\T(\lambda):\;T(x)=a, d_n(a,y,T)=1\big\}}.
%=T_{\nu_n(x,T)}(y)\}}.
\]
A tableau with $T(x)=a$ fulfils $d_n(a,y,T)=1$ if and only if exactly $n-y$ of the $n-x$ cells to the right of $x$ are occupied by entries greater than $a$. Of the $x-1$ cells to the left of $x$, exactly $n-a-(n-y)$ contain an entry greater than $a$. Thus the partial drop functions are given explicitly by
% First, note that we may give the partial drop functions explicitly as
\[
d_n(a,x,y)=\binom{x-1}{y-a}\binom{n-x}{n-y}(a-1)!\,(n-a)!\,.
\]
% Thus, the desired result can be obtained using the Chu--Vandermonde identity. Instead we shall derive a combinatorial recursion
% Thus, an explicit formula for $d_n(a,x)$ could be obtained using the Chu--Vandermonde identity \cite{GKP1990}*{p.169~(5.22)}. Instead 
We derive the recursion
\[
d_{n}(a+1,x,y)=\frac{y-a}{n-a}\, d_{n}(a,x,y)+\frac{n-y+1}{n-a}\, d_{n}(a,x,y-1)
\]
as follows. Exchanging $a$ and $a+1$ defines a bijection between the tableaux in which $a+1$ drops from $x$ to $y$ and the tableaux in which either $a$ drops from $x$ to $y$ and $a+1$ is to the left of $a$, or $a$ drops from $x$ to $y-1$ and $a+1$ starts to its right.

The left summand corresponds to the first case since here $y-a$ of the $n-a$ entries larger than $a$ must start to its left. Analogously, the right summand corresponds to the latter case since $n-y+1$ entries larger than $a$ start to its right.

Summation over $x$ yields the recursion
\[
d_{n}(a+1, y)=\frac{y-a}{n-a}\, d_{n}(a, y)+\frac{n-y+1}{n-a}\, d_{n}(a, y-1).
\]
Since the entry $1$ always drops to its initial position, we have the initial condition
\[
d_{n}(1, x)=(n-1)!
\]
for all $x\in\lambda$. A straightforward calculation yields
\[
d_{n}(a, x)=\begin{cases}\frac{n!}{n-a+1}&\quad\text{if }x\geq a, \\0&\quad\text{else.}\end{cases}
\]
\end{ex}

\begin{rem}
The above example perhaps raises the hope of finding a general recursion directly for the drop function that uses partial drop functions rather than signed exit numbers. 
Unfortunately, such an approach seems unlikely to work except in very specific examples. Indeed, for general $\lambda$ the partial drop functions really do depend on $U$ and not only on $Z_U$ whereas the signed exit numbers are much better behaved.
% For at least two reasons this seams unlikely.
% \begin{itemize}
% \item T
% \item The above calculation is based on the fact that $a+1$ drops to $y+1$ if the exchange with $a$ moves $a + 1$ to a smaller cell (with respect to some standard Young tableau). In the general case it could not just drop to $N^+_\lambda(y)$ but also to some other positions which generates a case that is more difficult to treat.
% \end{itemize}
\end{rem}

Nevertheless, we observe that for $\lambda=(n)$ we have
\[
\gcd\{d_{n}(a, x):\;1\leq a,x\leq n\}
=\gcd\left\{\frac{n!}{n-a+1}:\;1\leq a\leq n\right\}
=\frac{n!}{\lcm\{1,\dots,n\}}.
\]
Surprisingly, in a computer experiment for the row-wise Novelli--Pak--Stoyanovskii algorithm applied to other tableaux of small shape this equality held as well. For several other tableaux the greatest common divisor was still a factor of the right hand side. Hence, we close with the following conjecture.

\begin{conj}
Let $n\in\N$, $\lambda$ be a partition of $n$ and $U\in\SYT(\lambda)$ such that the corresponding Novelli--Pak--Stoyanovskii algorithm is uniformly distributed. Then 
\[
\frac{n!}{\lcm\{1,\dots,n\} \cdot \gcd\{d_U(a,x):\;1\leq a\leq n,x\in\lambda\}}\in\N.
\]
\end{conj}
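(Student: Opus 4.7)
\emph{Proof plan.} The starting point is to solve the recursion of Theorem~\ref{thm:slope} explicitly under the uniform-distribution assumption. Using $\omega_U(b,x) = \tfrac{n!}{f_\lambda}\,o_\lambda(b,x)$, where $o_\lambda(b,x) \coloneqq \abs{\{W \in \SYT(\lambda) : W(x) = b\}}$, and substituting $\Delta_U(b,x) = d_U(b+1,x) - d_U(b,x)$, the recursion of Theorem~\ref{thm:slope} rewrites as
\[
(n-b)\,d_U(b+1,x) = (n-b+1)\,d_U(b,x) - \tfrac{n!}{f_\lambda}\,o_\lambda(b,x).
\]
A straightforward induction on $b$ (with base case $d_U(1,x) = (n-1)!$) then yields the closed formula
\[
d_U(b,x) = \frac{n!\,N_\lambda(b,x)}{(n-b+1)\,f_\lambda}, \qquad N_\lambda(b,x) \coloneqq \abs{\{W \in \SYT(\lambda) : W(x) \geq b\}}.
\]

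Setting $D \coloneqq \gcd_{a,x} d_U(a,x)$, the conjecture is equivalent to $v_p(D) \leq v_p(n!) - k_p$ for every prime $p \leq n$, where $k_p \coloneqq \lfloor\log_p n\rfloor$. In view of the closed formula, this reduces to exhibiting, for each such $p$, a pair $(b,x)$ with $N_\lambda(b,x)>0$ satisfying
\[
v_p(n-b+1) - v_p(N_\lambda(b,x)) \geq k_p - v_p(f_\lambda).
\]

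The natural first attempt is to pick $b$ with $p^{k_p} \mid n-b+1$ and $x$ with $m_\lambda(x) \geq b$, where $m_\lambda(x) \coloneqq \abs{\{y \in \lambda : y \leq x\}}$; every $W \in \SYT(\lambda)$ then satisfies $W(x) \geq b$, so $N_\lambda(b,x) = f_\lambda$ and $d_U(b,x) = n!/(n-b+1)$ attains the desired valuation with equality. This succeeds whenever the interval $\{n-M_\lambda+1,\ldots,n\}$, where $M_\lambda \coloneqq \max_x m_\lambda(x)$, contains a multiple of $p^{k_p}$; it disposes in particular of the case $\lambda=(n)$ computed above and of every small shape tested.

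The main obstacle is that this direct approach need not always succeed: the required multiple of $p^{k_p}$ can lie outside $\{n-M_\lambda+1,\ldots,n\}$. In those cases one must produce a pair with $N_\lambda(b,x) < f_\lambda$ satisfying $v_p(N_\lambda(b,x)) \leq v_p(f_\lambda) + k_p - v_p(n-b+1)$. A natural tool is the branching rule expressing $N_\lambda(b,x)$ as a sum over sub-Young-diagrams $\nu \subseteq \lambda$ of size $b-1$ avoiding $x$, combined with the hook-length formula and Aitken's determinantal formula for skew shapes to control the $p$-adic valuation of each summand. Establishing the required bound uniformly in $\lambda$ and $p$---probably via a case analysis tied to the interaction between the shape of $\lambda$ and the prime power $p^{k_p}$---appears to be the heart of the problem and is expected to be the most delicate step.
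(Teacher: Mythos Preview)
The statement you are attempting is presented in the paper as an open \emph{conjecture}; the paper offers no proof, only the observation that it holds with equality for $\lambda=(n)$ and was verified numerically for various small shapes. There is therefore no ``paper's own proof'' to compare against.

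Your proposal is likewise not a proof, and you are candid about this: the final paragraph concedes that the decisive inequality $v_p\bigl(N_\lambda(b,x)\bigr)\le v_p(f_\lambda)+v_p(n-b+1)-k_p$ remains to be established in the cases where the direct choice fails, and you describe this as ``the heart of the problem''. That assessment is accurate. Your ``first attempt'' handles only those primes $p$ for which the interval $\{n-M_\lambda+1,\dots,n\}$ contains a multiple of $p^{k_p}$, and since $M_\lambda=\max\{ij:(i,j)\in\lambda\}$ can be much smaller than $n$ (e.g.\ for staircase or near-square shapes), there is genuinely something left to do. The suggested tools (branching, hook-length and determinantal formulas) are plausible, but no concrete argument is given for why the required $p$-adic bound should hold; this is exactly the gap.

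That said, the part of your proposal that \emph{is} complete is correct and worth recording. The derivation of the closed form
\[
d_U(b,x)=\frac{n!\,N_\lambda(b,x)}{(n-b+1)\,f_\lambda},\qquad N_\lambda(b,x)=\abs{\{W\in\SYT(\lambda):W(x)\ge b\}},
\]
from the recursion of \reft{thm:slope} together with the Drop Theorem is valid (the substitution $\Delta_U(b,x)=d_U(b+1,x)-d_U(b,x)$ and the induction both check), and this formula does not appear in the paper. It reduces the conjecture to a purely combinatorial statement about the $p$-adic valuations of the counts $N_\lambda(b,x)$, independent of the sorting algorithm. This is a meaningful reformulation, but it is a reduction, not a solution.
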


\section*{Acknowledgement}
We are thankful to Theresia Eisenkölbl, Ilse Fischer, Christian Krattenthaler and Henri Mühle for their helpful comments. We also wish to thank the anonymous referees for their thorough reading of this paper and many useful suggestions.

The research was funded by the FWF grants Z130-N13 and S50-N15.

%Include the references
\bibliographystyle{plain}
\bibliography{complexity}

\end{document}